\theoremstyle{plain}
\newtheorem{theorem}{Theorem}[section]
\newtheorem{lemma}[theorem]{Lemma}
\newtheorem{proposition}[theorem]{Proposition}
\newtheorem{corollary}[theorem]{Corollary}
\theoremstyle{definition}
\theoremstyle{remark}
\newtheorem*{remark}{Remark}
\numberwithin{equation}{section}
 \newcommand{\M}{\mathbb M_{k(n)}(E)}
\begin{document}

\title{Extensions and Dilations of module maps}

\author{Massoud Amini}

\address{Department of Mathematics\\ Faculty of Mathematical Sciences\\ Tarbiat Modares University\\ Tehran 14115-134, Iran}
\email{mamini@modares.ac.ir}

\address{School of Mathematics\\
Institute for Research in Fundamental Sciences (IPM)\\
Tehran 19395-5746, Iran}
\email{mamini@ipm.ir}

\thanks{The  author was partly supported by a grant from IPM (No. 94430215).}

\keywords{operator algebras, operator amenability, module
operator amenability, inverse semigroup, Fourier algebra}

\subjclass[2000]{43A07,46L07}

\maketitle

\begin{abstract}
We study completely positive
module maps on $C^{*}$-algebras which are $C^*$-module over another $C^*$-algebra
with compatible actions. We extend several well known dilation and extension results to this setup, including the Stinespring dilation theorem and Wittstock, Arveson, and Voiculescu extension theorems.
\end{abstract}

\section{Introduction}\label{s1}

For the rest of this paper, we fix a $C^*$-algebra $\mathfrak A$ and let $A$ be a $C^*$-algebra and a left Banach
$\mathfrak A$-module (that is, a bi-module with contractive left action) with compatible conditions,
\begin{equation*}
(\alpha\cdot a)^*=\alpha^*\cdot a^*,\,\, \alpha\cdot (ab)=(\alpha\cdot a)b,\,\, (\alpha\beta)\cdot  a=\alpha\cdot(\beta\cdot a),
\end{equation*}
for each $a,b\in A $ and $\alpha, \beta\in \mathfrak A.$ In this case, we  say that $A$ is a  $\mathfrak A$-$C^*$-module, or simply a $C^*$-module (it is then understood that the algebra and module structures on $A$ are compatible in the above sense). A $C^*$-subalgebra which is also an $\mathfrak A$-submodule is simply called a $C^*$-submodule.

Let $Z(A)$ be the center of $A$. An element $a\in A$ is called $\mathfrak A$-central if $\alpha\cdot a\in Z(A)$, for each $\alpha\in \mathfrak A$. We say that $\mathfrak A$ acts centrally on $A$, or $A$ is a central $\mathfrak A$-module, if $Z(A)$ is a submodule of $A$. When $\mathfrak A$ is a unital $C^*$-algebra, we say that $A$ is unital (as an  $\mathfrak A$-module), if $A$ is a unital $C^*$-algebra and a neo-unital $\mathfrak A$-module. In this case, $A$ is a central $\mathfrak A$-module if and only if $1_A$ is a central element of $A$. this is also equivalent to the following compatibility condition:
$$(\alpha\cdot a)(\beta\cdot b)=(\alpha\beta)\cdot (ab),$$
for each $\alpha, \beta\in \mathfrak A$, each $a\in Z(A)$, and $b\in A$.

In some cases we have to work with operator $\mathfrak A$-modules with no algebra structure (and in particular with certain Hilbert $\mathfrak A$-modules). If $E, F$ are operator $\mathfrak A$-modules, a module map $\phi: E\to F$ is a continuous linear map which preserves the left $\mathfrak A$-module action.

Throughout this paper, we use the notation $\mathbb B(X)$ to denote the set of bounded (adjointable) linear operators on a Hilbert space (Hilbert $C^*$-module). Also the center of a $C^*$-algebra $D$ is denoted by $\mathcal Z(D)$.

\section{unitzation}

We freely use the abbreviations and notations of \cite{bo}, in particular, c.p., u.c.p., and c.c.p. stand for completely positive, unital completely positive, and contractive completely positive, respectively.

Let $A$ be an  $\mathfrak A$-module. We write $\mathfrak A\cdot A$ for the closed linear span of the set of elements of the form $\alpha\cdot a$, with $\alpha\in\mathfrak A, \ a\in A$. We say that $A$ is neo-unital if $\mathfrak A\cdot A=A$. In this case, if $\mathfrak A$ is a unital $C^*$-algebra, then $1_{\mathfrak A}\cdot a=a$, for $a\in A$.

When $\mathfrak A$ is a unital $C^*$-algebra, then $\tilde A:=A\times \mathfrak A$ is called the (minimal) unitization of $A$. It is a unital $C^*$-algebra under the norm $\|(a,\alpha)\|=\sup\{\|ab+\alpha\cdot b\|: b\in A, \|b\|=1\}$ and multiplication
$$(a,\alpha)(b,\beta)=(ab+\alpha\cdot b+\beta\cdot a, \alpha\beta),$$
with unit $(0, 1_{\mathfrak A})$. Then $A$ is identified with a closed ideal in $\tilde A$. The module action of $\mathfrak A$ on $\tilde A$ is defined by $\alpha\cdot (a,\beta)=(\alpha\cdot a, \alpha\beta)$. Clearly $\tilde A$ is unital as a $\mathfrak A$-module. One should note that $\tilde A$ is not the same as the direct sum $A\oplus \mathfrak A$ (whose elements are denoted by $a\oplus\alpha$).

\begin{lemma} \label{exp}
Let  $\theta: A\to B$ be a c.c.p. module map. If there is a sequence  of operator subsystems and submodules $E_n\subseteq \M$ with a sequence of c.c.p. projections $P_n: \M\to E_n$, and there are c.c.p. module maps $\varphi_n: A\to E_n$ and $\psi_n: E_n\to B$ such that $\psi_n\circ\varphi_n\to \theta$ in point-norm topology, then $\theta: A\to B$ is $E$-nuclear.
\end{lemma}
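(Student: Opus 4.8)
The plan is to promote the given factorization through the subsystems $E_n$ to a factorization through the full amplifications $\M$, using the inclusions $E_n\hookrightarrow\M$ together with the c.c.p. projections $P_n$ as the two extra ingredients. Recall that $\theta$ is $E$-nuclear precisely when there exist c.c.p. module maps $\Phi_n\colon A\to\M$ and $\Psi_n\colon\M\to B$ with $\Psi_n\circ\Phi_n\to\theta$ in the point-norm topology; so it suffices to manufacture such a pair of sequences out of the data we are handed.

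First I would record that each inclusion $\iota_n\colon E_n\hookrightarrow\M$ is itself a c.c.p. module map. This is immediate from the hypothesis that $E_n$ is an operator subsystem \emph{and} submodule of $\M$: the inclusion is completely contractive and completely positive as a restriction of the identity on the ambient operator system, and it preserves the left $\mathfrak A$-action exactly because $E_n$ is an $\mathfrak A$-submodule (under the entrywise action $\mathfrak A$ carries on $\M=\mathbb M_{k(n)}(E)$). I would then set $\Phi_n:=\iota_n\circ\varphi_n\colon A\to\M$ and $\Psi_n:=\psi_n\circ P_n\colon\M\to B$. Both are c.c.p. module maps, being composites of c.c.p. module maps: the $\varphi_n,\psi_n$ by hypothesis, $\iota_n$ by the preceding observation, and $P_n$ by hypothesis.

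The crucial identity is $P_n\circ\iota_n=\mathrm{id}_{E_n}$, which holds because $P_n$ is a projection with range $E_n$ and hence acts as the identity on its own range. Using it,
\begin{equation*}
\Psi_n\circ\Phi_n=\psi_n\circ P_n\circ\iota_n\circ\varphi_n=\psi_n\circ\varphi_n.
\end{equation*}
Consequently $\Psi_n\circ\Phi_n=\psi_n\circ\varphi_n\to\theta$ in the point-norm topology by hypothesis, so the c.c.p. module maps $\Phi_n\colon A\to\M$ and $\Psi_n\colon\M\to B$ furnish exactly the factorization demanded by the definition, and $\theta$ is $E$-nuclear.

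The argument is essentially formal, so I do not expect a deep obstacle; the only points that genuinely need care are the two structural verifications. The first is that $\iota_n$ is a c.c.p. \emph{module} map, which rests on $E_n$ being a submodule so that the entrywise $\mathfrak A$-action on $\M$ restricts correctly to it. The second is that the c.c.p. projection $P_n$ restricts to the identity on $E_n$; while this is what ``projection onto $E_n$'' is meant to say, it is the hinge of the whole proof and should be stated explicitly. A fully rigorous write-up would also confirm in passing that each $\M$ carries the expected entrywise $\mathfrak A$-module structure under which every map in sight is a module map, but this is routine.
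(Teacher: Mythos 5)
Your proof is correct and is essentially identical to the paper's own argument: both define $\Phi_n=\iota_n\circ\varphi_n$ and $\Psi_n=\psi_n\circ P_n$ and use the identity $P_n\circ\iota_n=\mathrm{id}_{E_n}$ to conclude $\Psi_n\circ\Phi_n=\psi_n\circ\varphi_n\to\theta$ in the point-norm topology. Your write-up merely spells out the routine verifications (that $\iota_n$ is a c.c.p.\ module map and that the compositions remain c.c.p.\ module maps) that the paper leaves implicit.
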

\begin{proof}
Let $\iota_n: E_n\hookrightarrow \M$ be the c.c.p. embedding and note that $P_n\circ\iota_n=id_{E_n}$, hence $(\psi_n\circ P_n)\circ(\iota_n\circ\varphi_n)\to \theta$ in point-norm topology, as required.
\end{proof}

The next lemma extends \cite[2.2.1-2.2.4]{bo} with similar proofs (except that here we should also take care of the module actions).

\begin{lemma}\label{unital}
Let $\mathfrak A$ be a unital $C^*$-algebra and $\theta: A\to B$ be a c.c.p. module map.

$(i)$ If $B$ is unital and central $\mathfrak A$-module, $\theta: A\to B$ extends to a u.c.p. module map
$$ \tilde\theta: \tilde A\to B;\ \ \tilde\theta(a, \alpha):=\theta(a)+ \alpha\cdot 1_B.$$
In general, $\theta: A\to B$ extends to a u.c.p. module map
$$ \tilde\theta: \tilde A\to \tilde B;\ \ \tilde\theta(a, \alpha):=(\theta(a), \alpha\cdot 1_{\tilde B}).$$

$(ii)$ If $A$ and $B$ are unital, $B$ is central and $1_A$ is in the multiplicative domain of $\theta$, then $\theta: A\to B$ extends to a u.c.p. module map
$$ \tilde\theta: A\oplus \mathfrak A\to B;\ \ \tilde\theta(a\oplus\alpha):=\theta(a)+ \alpha\cdot (1_B-\theta(1_A)).$$

$(iii)$ If $\theta$ is $E$-nuclear, then  $\tilde\theta$ is $(E\oplus \mathfrak A)$-nuclear in all the above cases.
\end{lemma}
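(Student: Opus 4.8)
The plan is to follow the classical arguments of \cite[2.2.1--2.2.4]{bo} step by step, inserting at each stage the verification that the extended map is an $\mathfrak A$-module map and using the centrality hypotheses to control the terms produced by the module action. For $(i)$, I would first confirm that $\tilde\theta$ is well defined, unital, and a module map: writing a generic element of $\tilde A$ as $(a,\alpha)$ and recalling $\beta\cdot(a,\alpha)=(\beta\cdot a,\beta\alpha)$, the module identity reduces to $\theta(\beta\cdot a)=\beta\cdot\theta(a)$ together with associativity of the action, while $\tilde\theta(0,1_{\mathfrak A})=1_{\mathfrak A}\cdot 1_B=1_B$ gives unitality by neo-unitality. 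Complete positivity is then obtained exactly as in \cite[2.2.1]{bo}; the only genuinely new point is that the module term $\alpha\cdot 1_B$ must play the role that the scalar multiple $\lambda 1$ plays there, and this is guaranteed by centrality of $B$: since $1_B$ is central, $\alpha\mapsto\alpha\cdot 1_B$ is a $*$-homomorphism of $\mathfrak A$ into $\mathcal Z(B)$, so these terms are central and combine with the values of $\theta$ precisely as scalars do in the classical computation. For the ``in general'' statement into $\tilde B$, the adjoined unit $1_{\tilde B}$ is automatically central, so no hypothesis on $B$ is then required.

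For $(ii)$, the new ingredient is that $1_A$ lies in the multiplicative domain of $\theta$, which forces $\theta(1_Aa)=\theta(1_A)\theta(a)$ and $\theta(a1_A)=\theta(a)\theta(1_A)$, hence $0\le\theta(1_A)\le 1_B$ and $1_B-\theta(1_A)\ge 0$ is a well-behaved positive defect. Since $A\oplus\mathfrak A$ carries the direct-sum rather than the module-unitization structure, I would verify unitality through $\tilde\theta(1_A\oplus 1_{\mathfrak A})=\theta(1_A)+(1_B-\theta(1_A))=1_B$ and then reduce complete positivity to that of the block map $a\oplus\alpha\mapsto\theta(a)\oplus\bigl(\alpha\cdot(1_B-\theta(1_A))\bigr)$, whose modular summand is c.p. because centrality of $B$ makes $\alpha\mapsto\alpha\cdot 1_B$ a $*$-homomorphism into $\mathcal Z(B)$, post-composed with multiplication by the central positive element $1_B-\theta(1_A)$.

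For $(iii)$, I would unitize the factorization witnessing $E$-nuclearity. Given c.c.p. module maps $\varphi_n:A\to E_n\subseteq\M$ and $\psi_n:E_n\to B$ with $\psi_n\circ\varphi_n\to\theta$ in point-norm, applying $(i)$ (or $(ii)$) to each $\varphi_n$ and $\psi_n$, and extending the projections $P_n$ to c.c.p. projections onto the unitized subsystems, produces c.c.p. module maps factoring through $\mathbb M_{k(n)}(E\oplus\mathfrak A)$ whose composite is $\widetilde{\psi_n\circ\varphi_n}$. Because all these extensions agree on the adjoined part by construction, the difference $\widetilde{\psi_n\circ\varphi_n}(a,\alpha)-\tilde\theta(a,\alpha)$ equals $(\psi_n\circ\varphi_n)(a)-\theta(a)$, so point-norm convergence on $A$ passes immediately to point-norm convergence on all of $\tilde A$, and Lemma \ref{exp} (with $E\oplus\mathfrak A$ in place of $E$) yields $(E\oplus\mathfrak A)$-nuclearity.

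The step I expect to be the main obstacle is the complete positivity assertion in $(i)$: unlike the scalar case, the term $\alpha\cdot 1_B$ coming from the module action must be reconciled with the values of $\theta$ inside the matrix amplifications, and it is exactly here that centrality of $B$ is indispensable, since without it the elements $\alpha\cdot 1_B$ need not lie in $\mathcal Z(B)$ and a positive matrix over $\tilde A$ could fail to map to a positive matrix over $B$.
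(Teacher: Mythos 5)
Your proposal is correct and follows essentially the same route as the paper: $(i)$ is the Brown--Ozawa bidual/correction-term argument with the positive matrix $[\alpha_{ij}\cdot 1_B]$ replacing the scalar matrix and centrality of $B$ supplying its commutation with $\mathrm{diag}\big(1_B-\theta^{**}(1_{A^{**}})\big)$; $(ii)$ is the same splitting of $\tilde\theta$ into $\theta$ plus a modular summand made completely positive by centrality and square roots; and $(iii)$ unitizes the factorization and applies Lemma~\ref{exp} with $E\oplus\mathfrak A$, exactly as the paper does. The one point to make explicit in $(ii)$ is why $1_B-\theta(1_A)$ may be treated as central: the paper derives $\theta(1_A)\in\mathcal Z(B)$ from the multiplicative-domain hypothesis after first reducing to the case of dense range, although your argument really only needs $\alpha\cdot 1_B\in\mathcal Z(B)$, since $\alpha\cdot\big(1_B-\theta(1_A)\big)=\big(1_B-\theta(1_A)\big)^{\frac{1}{2}}(\alpha\cdot 1_B)\big(1_B-\theta(1_A)\big)^{\frac{1}{2}}$.
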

\begin{proof}
$(i)$ We have to show that $\tilde \theta$ is c.p. map. The action of $\mathfrak A$ on $A$ extends to an action on $A^{**}$ which satisfies the compatibility conditions, and $A^{**}$ is a neo-unital $\mathfrak A$-module. Thus $\tilde A$ could be identified with $A+\mathfrak A\cdot 1_{A^{**}}\subseteq A^{**}$, and $\tilde\theta$ is the restriction of the c.p. map $\theta^{**}$, and so  $\tilde\theta: \tilde A\to B^{**}$ is a c.p. map. Next we have the matrix equation
$$\tilde\theta_n[a_{ij}+\alpha_{ij}\cdot 1_{A**}]-(\theta^{**})_n[a_{ij}+\alpha_{ij}\cdot 1_{A**}]=[\alpha_{ij}\cdot 1_{B}]diag\big(1_B-\theta^{**}(1_{A^{**}})\big).$$
If $[a_{ij}+\alpha_{ij}\cdot 1_{A**}]\in \mathbb M_n(\tilde A)_{+}$, then $[\alpha_{ij}\cdot 1_{A^{**}}]\in \mathbb M_n(A^{**})_{+}$, since $\mathbb M_n(\tilde A\cdot 1_{A^{**}})\subseteq \mathbb M_n(A^{**})$ could be identified with the quotient of $\mathbb M_n(\tilde A)$ by the closed ideal $\mathbb M_n(\tilde A)$. Thus
$$[\alpha_{ij}\cdot 1_{B}]=(\tilde\theta)_n[\alpha_{ij}\cdot 1_{A**}]\in \mathbb M_n(B^{**})_{+}.$$
On the other hand, by the compatibility conditions,
\begin{align*} (\alpha_{ij}\cdot 1_{B})\big(1_B-\theta^{**}(1_{A^{**}})\big)&=(\alpha_{ij}\cdot 1_{B})\big(1_{\mathfrak A}\cdot(1_B-\theta^{**}(1_{A^{**}}))\big)\\
 &=(\alpha_{ij}1_{\mathfrak A})\cdot \big(1_{B}(1_B-\theta^{**}(1_{A^{**}}))\big)\\
 &=(1_{\mathfrak A}\alpha_{ij})\cdot \big((1_B-\theta^{**}(1_{A^{**}}))1_{B}\big)\\
  &=\big(1_B-\theta^{**}(1_{A^{**}})\big)(\alpha_{ij}\cdot 1_{B}),
  \end{align*}
 hence the positive matrices $[\alpha_{ij}\cdot 1_{B}]$ and $diag\big(1_B-\theta^{**}(1_{A^{**}})\big)$ commute and their product is positive in $\mathbb M_n(B^{**})$. Therefore,
$$\tilde\theta_n[a_{ij}+\alpha_{ij}\cdot 1_{A**}]\geq(\theta^{**})_n[a_{ij}+\alpha_{ij}\cdot 1_{A**}]\geq 0,$$
as required.

$(ii)$ Without loss of generality we may assume that $\theta$ has dense range. Since $1_A$ is in the multiplicative domain of $\theta$, $\theta(1_A)\in Z(B)$. We have $1_B\geq\|\theta\|1_A\geq \theta(1_A)$. Choose $x\in Z(B)$ with $ 1_B- \theta(1_A) =xx^*$ and let $\alpha=\beta\beta^*\geq 0$ in $\mathfrak A$, then
$$\alpha\cdot \big(1_B- \theta(1_A)\big)=(\beta\beta^*)\cdot(xx^*)=(\beta\cdot x)(\beta\cdot x)^*\geq 0,$$
in $B$. This means that the map $\alpha\mapsto \alpha\cdot \big(1_B- \theta(1_A)\big)$ is positive from $\mathfrak A$ to $A$. A similar argument, this time applied to the positive matrices in $\mathbb M_n(\mathfrak A)$ shows that the map is indeed c.p. Now the sum of c.p. maps is again c.p.

$(iii)$ Let  $\varphi_n: A\to \M$ and $\psi_n: \M\to B$ be c.c.p. maps with $\psi_n\circ\varphi_n\to \theta$ in point-norm topology. In $(i)$, extend $\varphi_n$ to u.c.p. map $\tilde\varphi_n: \tilde A\to \M\oplus \mathfrak A$ and $\psi_n$ to u.c.p. map $\tilde\psi_n:  \M\oplus \mathfrak A\to B$. Now the canonical projection
$$P_n: \mathbb M_{k(n)}(E\oplus \mathfrak A)\to \M\oplus \mathfrak A$$
is a c.c.p. map and Lemma \ref{exp} applies. In $(ii)$, we extend $\varphi_n$ to u.c.p. map $\tilde\varphi_n: \tilde A\to \M\oplus \mathfrak A$ and $\psi_n$ to u.c.p. map $\tilde\psi_n:  \M\oplus \mathfrak A\to \tilde B$ and argue as above.
\end{proof}

In part $(ii)$, the condition that $1_A$ is in the multiplicative domain of $\theta$ could be replaced by any of the conditions that $\theta(1_A)$ is a projection or a unitary (both of these imply that $1_A$ is in the multiplicative domain of $\theta$, c.f. \cite{ch}) or $\theta(1_A)$ is in the center of $B$. The next lemma extends \cite[2.2.5]{bo}.

\begin{lemma} \label{in}
If $\mathfrak A$ is a unital $C^*$-algebra, $E$ is a unital injective $C^*$-algebra and a unital and central left $\mathfrak A$-module with compatible actions, $A$ is unital and $\tilde\varphi: A\to \mathbb M_n(E)$ is a c.p. module map, then there is a u.c.p. module map $\varphi: A\to \mathbb M_n(E)$ such that
$$\tilde\varphi(a)=\tilde\varphi(1_A)^{\frac{1}{2}}\varphi(a)\tilde\varphi(1_A)^{\frac{1}{2}},$$
for $a\in A$.
\end{lemma}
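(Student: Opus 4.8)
The plan is to mimic the proof of \cite[2.2.5]{bo}, the only new point being that every step must respect the $\mathfrak A$-action. The first move is to record how the module condition simplifies. Since $\mathbb M_n(E)$ is a unital and central left $\mathfrak A$-module, writing $z_\alpha:=\alpha\cdot 1_{\mathbb M_n(E)}$ the compatibility and centrality conditions give $\alpha\cdot x=z_\alpha x=xz_\alpha$ with $z_\alpha\in\mathcal Z(\mathbb M_n(E))$, and $\alpha\mapsto z_\alpha$ a unital $*$-homomorphism. Thus a linear map into $\mathbb M_n(E)$ is an $\mathfrak A$-module map exactly when it intertwines $\alpha\cdot$ on the domain with multiplication by the central element $z_\alpha$; in particular conjugation by any fixed element of $\mathbb M_n(E)$ commutes with the action. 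Put $c=\tilde\varphi(1_A)\in\mathbb M_n(E)_+$. If $c$ is invertible, then $\varphi(a):=c^{-1/2}\tilde\varphi(a)c^{-1/2}$ is already the required map: it is u.c.p., satisfies $\varphi(1_A)=1$, gives $\tilde\varphi(a)=c^{1/2}\varphi(a)c^{1/2}$, and is a module map because $z_\alpha$ is central, hence commutes with $c^{-1/2}$, so that $\varphi(\alpha\cdot a)=c^{-1/2}z_\alpha\tilde\varphi(a)c^{-1/2}=z_\alpha\varphi(a)$.

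For general $c$ the division is not algebraic, and here injectivity enters twice. First, $\mathbb M_n(E)$ is injective, hence monotone complete, so the range projection $q$ of $c$ lies in $\mathbb M_n(E)$; and fixing a faithful unital representation $\mathbb M_n(E)\subseteq\mathbb B(\mathcal H)$ there is a conditional expectation $\Phi\colon\mathbb B(\mathcal H)\to\mathbb M_n(E)$. For $\epsilon>0$ set $c_\epsilon=c+\epsilon 1$ and $\varphi_\epsilon(a)=c_\epsilon^{-1/2}\tilde\varphi(a)c_\epsilon^{-1/2}$; each $\varphi_\epsilon$ is a c.c.p. module map, since $z_\alpha$ commutes with $c_\epsilon^{-1/2}$. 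By compactness of the c.c.p. maps $A\to\mathbb B(\mathcal H)$ in the point-weak$^*$ topology, choose a cluster point $\varphi_0$; it is again a c.c.p. module map, as multiplication by the fixed $z_\alpha$ is weak$^*$ continuous. Using $t^{1/2}(t+\epsilon)^{-1/2}\uparrow 1$ on $(0,\infty)$ one gets $c^{1/2}c_\epsilon^{-1/2}\to q$ strongly, whence $c^{1/2}\varphi_0(a)c^{1/2}=q\tilde\varphi(a)q=\tilde\varphi(a)$, the last equality because $\tilde\varphi(a)=q\tilde\varphi(a)q$. Now set $\varphi=\Phi\circ\varphi_0$. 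As $\Phi$ is $\mathbb M_n(E)$-bimodular and fixes $\mathbb M_n(E)$, and $c^{1/2},z_\alpha\in\mathbb M_n(E)$, both the factorization $\tilde\varphi(a)=c^{1/2}\varphi(a)c^{1/2}$ and the module identity $\varphi(\alpha\cdot a)=z_\alpha\varphi(a)$ survive the application of $\Phi$. The same computation with $a=1_A$ gives $\varphi(1_A)=q$, so $\varphi$ is c.c.p. and module but in general only subunital.

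It remains to restore unitality without disturbing the other two properties. Because $c^{1/2}(1-q)=0$, any correction supported in the complementary corner $(1-q)\mathbb M_n(E)(1-q)$ is invisible to the factorization, and because $z_\alpha$ is central it commutes with $1-q$; hence if $\psi\colon A\to\mathbb M_n(E)$ is any u.c.p. module map, then $\varphi'(a):=\varphi(a)+(1-q)\psi(a)(1-q)$ is a u.c.p. module map still satisfying $\tilde\varphi(a)=c^{1/2}\varphi'(a)c^{1/2}$. So the crux is to produce a single u.c.p. module map $\psi$. For this I would transport the action: the assignment $\alpha\cdot 1_A\mapsto z_\alpha$ is a unital $*$-homomorphism of the central subalgebra generated by the $\alpha\cdot 1_A$ into $\mathcal Z(\mathbb M_n(E))$, which I would extend to a u.c.p. map $A\to\mathbb M_n(E)$ by injectivity and then recognise as a module map using Choi's multiplicative-domain theorem (c.f. \cite{ch}), since that central subalgebra falls inside the multiplicative domain of the extension. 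I expect this last point---verifying that the action-transporting homomorphism is well defined on the relevant central subalgebra and lands inside the multiplicative domain of its extension, so that the fill $\psi$ is genuinely a module map---to be the main obstacle, and the place where the central-module hypothesis on $E$ is indispensable.
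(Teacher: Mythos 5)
Your proposal follows the same skeleton as the paper's proof: the invertible case is handled identically (conjugation by $c^{-1/2}$, with centrality of $z_\alpha$ giving the module property), and in the general case you, like the paper, pass to a faithful representation, use the conditional expectation supplied by injectivity, and repair unitality by a filler supported on a complementary corner. Your $\epsilon$-regularization plus point-weak$^*$ cluster point is in fact a more careful substitute for the paper's corresponding step (the paper applies the invertible case to $\tilde\varphi\colon A\to p^\perp\mathbb B(H^n)p^\perp$, where $u$ need not be invertible either). However, your unitalization step contains a genuine error: you claim that the range projection $q$ of $c$ --- by which you mean the strong limit of $c^{1/2}c_\epsilon^{-1/2}$ in $\mathbb B(\mathcal H)$ --- lies in $\mathbb M_n(E)$ because injective $C^*$-algebras are monotone complete. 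Monotone completeness produces a supremum \emph{inside} $\mathbb M_n(E)$, but the inclusion $\mathbb M_n(E)\subseteq\mathbb B(\mathcal H)$ is not normal, so that supremum need not coincide with the strong limit $q$. For instance, with $E=\ell^\infty$ and $c=(1,\tfrac12,\tfrac13,\dots)$, in a faithful representation containing the GNS space of a singular state the $\mathbb B(\mathcal H)$-range projection of $c$ is not in $E$ (inside $E$ the only projection $p$ with $pc=c$ is $1$). Consequently $\varphi(1_A)=\Phi(q)$ need not equal $q$, the corner $(1-q)\mathbb M_n(E)(1-q)$ need not sit inside $\mathbb M_n(E)$, and unitality of your $\varphi'$ is unproved. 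The repair is cheap and stays within your framework: put $d:=1-\varphi(1_A)\in\mathbb M_n(E)_+$; bimodularity of $\Phi$ gives $c^{1/2}\varphi(1_A)c^{1/2}=\Phi(c^{1/2}qc^{1/2})=\Phi(c)=c$, hence $c^{1/2}dc^{1/2}=0$, hence $d^{1/2}c^{1/2}=0$; then $\varphi'(a):=\varphi(a)+d^{1/2}\psi(a)d^{1/2}$ is u.c.p., is a module map (since $d\in\mathbb M_n(E)$ commutes with the central $z_\alpha$), and still satisfies the factorization.

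The second gap is the one you flagged yourself, and it is real: the filler $\psi$ cannot be constructed your way, nor in general at all. The assignment $\alpha\cdot 1_A\mapsto z_\alpha$ is well defined only if $\{\alpha:\alpha\cdot 1_A=0\}\subseteq\{\alpha:\alpha\cdot 1_{\mathbb M_n(E)}=0\}$, and nothing in the hypotheses forces this: take $\mathfrak A=\mathbb C\oplus\mathbb C$ acting on $A=\mathbb C$ through the first coordinate and on $E=\mathbb C$ through the second; all hypotheses of the lemma hold, yet $(1,0)\cdot 1_A=1_A$ while $(1,0)\cdot 1_E=0$. Worse, in this example \emph{no} unital module map $A\to\mathbb M_n(E)$ exists, since any module map $\psi$ satisfies $\psi(1_A)=\psi((1,0)\cdot 1_A)=(1,0)\cdot\psi(1_A)=0$; so the filler is genuinely obstructed, and (taking $\tilde\varphi=0$, the only c.p.\ module map here) the lemma as stated actually fails without some hypothesis tying the $\mathfrak A$-action on $A$ to the one on $E$. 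You should know that the paper's own proof has exactly the same hole: it fills the kernel corner with ``$\omega(a)p$ for some c.p.\ module map $\omega\colon A\to\mathfrak A$,'' and unitality of its $\varphi_2$ silently requires $\omega(1_A)\cdot p=p$, i.e.\ a \emph{unital} c.p.\ module filler, whose existence is asserted rather than proved and fails in the same example. So, in summary: your route is essentially the paper's; your regularization is sound (indeed cleaner than the paper's corner reduction); gap one is yours alone but easily repaired as above; gap two you share with the published proof --- the difference being that you honestly identified it as the crux.
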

\begin{proof}
Consider a faithful embedding $E\subseteq \mathbb B(H)$ such that $E$ contains the identity of $\mathbb B(H)$. If $u:=\tilde\varphi(1_A)\in\mathbb M_n(E)\subseteq \mathbb M_n(\mathbb B(H))=\mathbb B(H^n)$ is invertible in $\mathbb B(H^n)$, we  put $\varphi_1(a)=u^{-\frac{1}{2}}\tilde\varphi(a)u^{-\frac{1}{2}}$. For each $a\in A$ and $\alpha\in\mathfrak A$,
$$u(\alpha\cdot I_n)=(1_{\mathfrak A}\cdot u)(\alpha\cdot I_n)=(1_{\mathfrak A}\alpha)\cdot (uI_n)=(\alpha1_{\mathfrak A})\cdot (I_nu)=(\alpha\cdot I_n)u,$$
where $I_n$ is the identity of $\mathbb M_n(E)$. Hence
$$\varphi_1(\alpha\cdot a)=u^{-\frac{1}{2}}\tilde\varphi(\alpha\cdot a)u^{-\frac{1}{2}}=u^{-\frac{1}{2}}(\alpha\cdot I_n) \tilde\varphi(a)u^{-\frac{1}{2}}=(\alpha\cdot I_n)u^{-\frac{1}{2}} \tilde\varphi(a)u^{-\frac{1}{2}}=\alpha\cdot\varphi_1(a),$$
and $\varphi_1: A\to \mathbb M_n(E)$ is a u.c.p. module map.

In general, let $p: H^n\to \ker\tilde\varphi(1_A)$ and put $p^\perp=1-p$, then for $0\leq a\leq 1_A$ and $\zeta\in\ker\tilde\varphi(1_A)$ we have
$$-\|\varphi(a)^{\frac{1}{2}}\zeta\|^2=\langle\big(\tilde\varphi(1_A)-\tilde\varphi(a)\big)\zeta,\zeta\rangle\geq 0,$$
thus $\zeta\in\ker\tilde\varphi(a)$, in particular, $p$ acts as identity on the image of $\tilde\varphi(a)$, that is, $\tilde\varphi(a)=p^\perp\tilde\varphi(a)=\tilde\varphi(a)p^\perp$, which then holds for any $a\in A$. We let $\mathfrak A$ act on $H$ by $\alpha\cdot \xi=(\alpha\cdot 1_E)(\xi),$ for $\alpha\in\mathfrak A$ and $\xi\in H$. This gives a canonical module structure on $\mathbb B(H)$ such that $\tilde\varphi: A\to p^\perp \mathbb B(H^n)p^\perp$ is a c.p. module map. By the first part of the proof, there is a u.c.p. module map $\varphi_1: A\to p^\perp\mathbb B(H^n)p^\perp$ satisfying $u^{\frac{1}{2}}\varphi_1(a)u^{\frac{1}{2}}=p^\perp\tilde\varphi(a)p^\perp$, for $a\in A$. Put $\varphi_2(a):=\varphi_1(a)\oplus\omega(a)p$, for some c.p. module map $\omega: A\to \mathfrak A$, then $\varphi_2: A\to \mathbb B(H^n)$ is a u.c.p. module map.

By the injectivity of $E$, there is a conditional expectation $\mathbb E: \mathbb B(H)\to E$. Let $\mathbb E_n: \mathbb B(H^n)\to \mathbb M_n(E)$ be the amplification of $\mathbb E$ and put $\varphi=\mathbb E_n\circ\varphi_2$. For $a\in A$ and $\alpha\in\mathfrak A$,
$$\varphi(\alpha\cdot a)=\mathbb E_n\big(\varphi_2(\alpha\cdot a)\big)=\mathbb E_n\big((\alpha\cdot I_n)\varphi_2(a)\big)=(\alpha\cdot I_n)\mathbb E_n\big(\varphi_2(a)\big)=\alpha\cdot\varphi(a),$$
and $\varphi: A\to \mathbb M_n(E)$ is a u.c.p. module map.
Since $p^\perp u=up^\perp=u$ and $pu=up=0$, we get
$$
u^{\frac{1}{2}}\varphi_2(a)u^{\frac{1}{2}}=u^{\frac{1}{2}}(\varphi_1(a)+\omega(a)p)u^{\frac{1}{2}}=p^\perp\tilde\varphi(a)p^\perp+0=\tilde\varphi(a),
$$
therefore,
$$
u^{\frac{1}{2}}\varphi(a)u^{\frac{1}{2}}=u^{\frac{1}{2}}\mathbb E_n(\varphi_2(a))u^{\frac{1}{2}}=\mathbb E_n(u^{\frac{1}{2}}\varphi_2(a)u^{\frac{1}{2}})=\mathbb E_n(\tilde\varphi(a))=\tilde\varphi(a),
$$
for $a\in A$.
\end{proof}

\begin{proposition} \label{injc}
If $\mathfrak A$ is a unital $C^*$-algebra, $E$ is a unital injective $C^*$-algebra and a unital and central left $\mathfrak A$-module with compatible actions, $\theta: A\to B$ is an $E$-nuclear module map, then there are u.c.p. module maps $\varphi_n: \tilde A\to \mathbb M_{k(n)}(E)$ and $\psi_n: \mathbb M_{k(n)}(E)\to B$ such that
$\psi_n\circ \varphi_n\to \tilde\theta$ in the point-norm topology.
\end{proposition}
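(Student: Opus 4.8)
The plan is to deduce the statement from the $E$-nuclearity of $\theta$ by promoting the two factoring maps to unital ones, treating the domain map directly and absorbing the non-unitality of the range map into one extra diagonal copy of $E$. First I would use $E$-nuclearity to fix c.c.p. module maps $\varphi_n\colon A\to\M$ and $\psi_n\colon\M\to B$ with $\psi_n\circ\varphi_n\to\theta$ in point-norm topology. As $E$ is unital and central, so is $\M$, and Lemma \ref{unital}$(i)$ at once turns each $\varphi_n$ into a u.c.p. module map $\tilde\varphi_n\colon\tilde A\to\M$, $\tilde\varphi_n(a,\alpha)=\varphi_n(a)+\alpha\cdot 1_{\M}$. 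The domain side is thereby settled, and the whole difficulty is that $\psi_n$ need not be unital; set $c_n:=1_B-\psi_n(1_{\M})\ge 0$.

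Key algebraic inputs: because $E$ (resp.\ $B$) is unital and central, $\alpha\mapsto\alpha\cdot 1_E=:\pi(\alpha)$ and $\alpha\mapsto\alpha\cdot 1_B=:\rho(\alpha)$ are unital $*$-homomorphisms of $\mathfrak A$ into $\mathcal Z(E)$ and $\mathcal Z(B)$, and the actions are implemented by them ($\alpha\cdot e=\pi(\alpha)e$, $\alpha\cdot b=\rho(\alpha)b$). I would then enlarge $\M$ by a single corner: define $\varphi_n'\colon\tilde A\to\mathbb M_{k(n)+1}(E)$ by $\varphi_n'(a,\alpha)=\mathrm{diag}\big(\varphi_n(a)+\alpha\cdot 1_{\M},\ \pi(\alpha)\big)$, which is u.c.p. and a module map (the top block is $\tilde\varphi_n$, the corner is $\pi$ precomposed with the canonical c.p. projection $\tilde A\to\mathfrak A$, and it is unital since $\pi(1_{\mathfrak A})=1_E$). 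On the range side I would take $\psi_n':=(\psi_n\oplus\lambda_n)\circ\mathcal E_n$, where $\mathcal E_n\colon\mathbb M_{k(n)+1}(E)\to\M\oplus E$ is the block-diagonal compression (a u.c.p. module map) and $\lambda_n\colon E\to B$ is a completely positive module map with $\lambda_n(1_E)=c_n$. Then $\psi_n'$ is unital because $\psi_n(1_{\M})+c_n=1_B$, and a direct computation, using the module property $\lambda_n(\alpha\cdot 1_E)=\alpha\cdot c_n$, gives
\[
\psi_n'\circ\varphi_n'(a,\alpha)=\psi_n(\varphi_n(a))+\alpha\cdot\psi_n(1_{\M})+\alpha\cdot c_n=\psi_n(\varphi_n(a))+\alpha\cdot 1_B\longrightarrow\tilde\theta(a,\alpha).
\]
Relabelling $k(n)+1$ as $k(n)$ then yields the asserted u.c.p. module factorization of $\tilde\theta$.

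Thus everything reduces to the construction of the equivariant correction $\lambda_n\colon E\to B$. Here the defect $c_n$ can be split off cleanly: if $\sigma\colon E\to B$ is any u.c.p. module map, then $\lambda_n(e):=c_n^{1/2}\,\sigma(e)\,c_n^{1/2}$ is completely positive, sends $1_E$ to $c_n$, and is a module map, since $\rho(\alpha)\in\mathcal Z(B)$ commutes with $c_n^{1/2}$ and hence $c_n^{1/2}\rho(\alpha)\sigma(e)c_n^{1/2}=\rho(\alpha)c_n^{1/2}\sigma(e)c_n^{1/2}$. (The same compression identity, with $\sigma$ replaced by $\rho$ itself, shows directly that $\alpha\mapsto\alpha\cdot c_n$ is completely positive, so the analogous factorization through $\M\oplus\mathfrak A$ is unconditional — it is only the passage to a pure matrix algebra over $E$ that forces the issue.)

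The main obstacle is therefore the existence of a single unital completely positive module map $\sigma\colon E\to B$, equivalently a u.c.p.\ extension to all of $E$ of the assignment $\pi(\alpha)\mapsto\rho(\alpha)$ on $\pi(\mathfrak A)\subseteq\mathcal Z(E)$ that remains $\mathfrak A$-equivariant and lands in $B$. This is exactly the step that has no scalar counterpart and the only one using more than Lemma \ref{unital}: I expect it to be where the injectivity of $E$ is indispensable, furnishing (as in the proof of Lemma \ref{in}, via a conditional expectation onto $E$) the equivariant completely positive map reconciling the two central actions $\pi$ and $\rho$. Verifying that this $\sigma$ can be produced under the stated hypotheses is the technical heart of the proposition.
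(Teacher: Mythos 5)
Your reduction is executed correctly as far as it goes: since $E$ is unital and central, so is $\mathbb M_{k(n)}(E)$, hence Lemma \ref{unital}$(i)$ does unitize the domain maps; the block-diagonal bookkeeping is right; and \emph{granted} a c.p. module map $\lambda_n\colon E\to B$ with $\lambda_n(1_E)=c_n$, your formulas do yield u.c.p. module maps through $\mathbb M_{k(n)+1}(E)$ converging to $\tilde\theta$. The genuine gap is precisely the step you defer: the existence of a u.c.p. module map $\sigma\colon E\to B$. This cannot be supplied by injectivity of $E$, as you suggest. Injectivity of $E$ produces completely positive maps \emph{into} $E$ (extensions of maps defined on operator subsystems, or conditional expectations of larger algebras onto $E$, which is exactly how it is used in Lemma \ref{in}); it never produces maps \emph{out} of $E$ into an arbitrary $B$ --- that would amount to an injectivity property of $B$, which is not assumed. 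Moreover, the deferred lemma is genuinely stronger than what the hypotheses provide: writing $\pi(\alpha)=\alpha\cdot 1_E$ and $\rho(\alpha)=\alpha\cdot 1_B$, any \emph{unital} module map $\sigma$ satisfies $\rho(\alpha)=\sigma(\pi(\alpha))$ and hence forces $\ker\pi\subseteq\ker\rho$; but $E$-nuclearity of $\theta$ only furnishes non-unital module maps $\psi_n\colon\mathbb M_{k(n)}(E)\to B$, and for $\alpha\in\ker\pi$ these only give $\rho(\alpha)\psi_n(x)=\psi_n(\alpha\cdot x)=0$, i.e.\ $\rho(\alpha)$ annihilates the ranges of the $\psi_n$, not that $\rho(\alpha)=0$. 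So your plan does not reduce the proposition to something simpler: it reduces it to a statement that carries the same hidden compatibility between the two central actions, and the tool you invoke to prove that statement points the wrong way.

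The paper's proof avoids ever manufacturing a new map $E\to B$: the unit defect of the range maps is removed multiplicatively rather than additively. Injectivity of $E$ is used only on the domain side, via Lemma \ref{in}, to write the given c.c.p. maps as $\tilde\varphi_n(a)=\tilde\varphi_n(1_A)^{1/2}\varphi_n(a)\tilde\varphi_n(1_A)^{1/2}$ with $\varphi_n$ u.c.p.; the range maps are then renormalized inside $B$ by conjugation,
\begin{equation*}
\psi_n(x)=\big(\tilde\psi_n (\tilde\varphi_n(1_A))\big)^{-\frac{1}{2}}\,\tilde\psi_n\big(\tilde\varphi_n(1_A)^{\frac{1}{2}}x\tilde\varphi_n(1_A)^{\frac{1}{2}}\big)\,\big(\tilde\psi_n( \tilde\varphi_n(1_A))\big)^{-\frac{1}{2}},
\end{equation*}
so the only ingredients are the given $\tilde\psi_n$ and functional calculus on elements of $B$ already in their image --- no fresh equivariant map is needed to fill an extra block. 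If you wish to salvage your additive scheme, the correction $\lambda_n$ would likewise have to be extracted from the given data (compressions and conjugations of the $\psi_n$ themselves), not from injectivity of $E$; as written, the central step of your argument is missing and the route you propose for it cannot close it.
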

\begin{proof}
Let $\tilde\varphi_n: A\to\M$ and $\tilde\psi_n:\M\to B$ be  c.c.p. module maps with $\tilde\psi_n\circ \tilde\varphi_n\to \theta$ in the point-norm topology. By Lemma \ref{in}, there are u.c.p. module maps $\varphi_n: A\to \M$ such that
$$\tilde\varphi_n(a)=\tilde\varphi_n(1_A)^{\frac{1}{2}}\varphi_n(a)\tilde\varphi_n(1_A)^{\frac{1}{2}},$$
for $a\in A$. For $x\in\M$, let
$$\psi_n(x)=\big(\tilde\psi_n (\tilde\varphi_n(1_A))\big)^{-\frac{1}{2}}\tilde\psi_n\big(\tilde\varphi_n(1_A)^{\frac{1}{2}}x\tilde\varphi_n(1_A)^{\frac{1}{2}}\big)\big(\tilde\psi_n( \tilde\varphi_n(1_A))\big)^{-\frac{1}{2}},$$
then, as in the proof of the above lemma, one may check that $\psi_n$ is a u.c.p. module map, and clearly $\psi_n\circ \varphi_n$ converges to  $\tilde\theta$ in point-norm topology.
\end{proof}

\section{dilation and extension}

In this section we prove analogs of Wittstock and Arveson extension theorems, as well as the Stinespring dilation theorem  for module maps. Our proof of the Wittstock extension theorem follows the simple argument due to C.-Y. Suen \cite{s}.

We use a bimodule version of the notations in the previous section. Let $A$ be $C^*$-algebra, a left  $\mathfrak A$-module and a right $\mathfrak B$-module with compatible left and right actions (as in the previous section). We write $\mathfrak A\cdot A\cdot \mathfrak B$ for the closed linear span of the set of elements of the form $\alpha\cdot a\cdot \beta$, with $\alpha\in\mathfrak A, \beta \in \mathfrak B, a\in A$. We say that $A$ is neo-unital if $\mathfrak A\cdot A\cdot \mathfrak B=A$. In this case, if $\mathfrak A$ and $\mathfrak B$ are unital $C^*$-algebras, then $1_{\mathfrak A}\cdot a=a\cdot 1_{\mathfrak A}=a$, for each $a\in A$. When $\mathfrak A$ is a unital $C^*$-algebra, we say that $A$ is unital (as an  $\mathfrak A$-$\mathfrak B$-bimodule), if $A$ is a unital $C^*$-algebra and a neo-unital $\mathfrak A$-$\mathfrak B$-bimodule (we simply say, $A$ is a unital bimodule).

The next lemma extends \cite[Lemma 2.3]{s}.

\begin{lemma} \label{inj}
Let $\mathfrak A$ and $\mathfrak B$ are unital $C^*$-algebras and $A$ and $B$ be unital bimodules. Let $\theta_i: A \to B$ be a linear $\mathfrak A$-$\mathfrak B$-bimodule maps
for $i = 1,2,3,4$.

Define $\Theta: A\otimes \mathbb M_2(\mathbb C)\to  B\otimes \mathbb M_2(\mathbb C)$ by
$$\Theta\left[
                                                                       \begin{array}{cc}
                                                                         a & b \\
                                                                         c & d \\
                                                                       \end{array}
                                                                     \right]
     =\left[
     \begin{array}{cc}
                                                                         \theta_1(a) & \theta_2(b) \\
                                                                         \theta_3(c) & \theta_4(d) \\
                                                                       \end{array}
                                                                     \right].$$
If $\Theta$ is completely positive, then:

$(i)$ \ $\Theta$ is a $(\mathfrak A\oplus \mathfrak A)$-$(\mathfrak B\oplus \mathfrak B)$-bimodule map;

$(ii)$ \ $\theta_1$ and $\theta_4$  are c.p. $\mathfrak A$-$\mathfrak B$-bimodule maps;

$(iii)$ \  $\theta_2=\theta_3^*$ is a c.b. $\mathfrak A$-$\mathfrak B$-bimodule map;

$(iv)$ \  $±Re (\lambda\theta_2)< \frac{1}{2}(\theta_1+\theta_4),$ for all complex numbers $\lambda$  with $|\lambda|=1$;

$(v)$ \ the $n$-th amplifications $\theta_{in}=(\theta_i)_n$ satisfy
$$\|\Theta\|_{cb}\theta_{in}(xx^*)\geq\theta_{2n}(x)\theta_{2n}(x)^{*},$$
for $n\geq 1$, $i=1,4$, and $x\in A\otimes \mathbb M_n(\mathbb C)$.
\end{lemma}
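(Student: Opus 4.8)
The plan is to run the classical $2\times2$ matrix argument for completely positive maps (as in Paulsen's treatment and in Suen \cite{s}), carrying the bimodule structure along at each step. Since every $\theta_i$ is assumed to be an $\mathfrak A$-$\mathfrak B$-bimodule map, the module assertions will be essentially bookkeeping, while the genuine analytic content sits in $(v)$. For $(i)$ I would embed $\mathfrak A\oplus\mathfrak A$ into $\mathbb M_2(\mathfrak A)$ as diagonal matrices, so that $(\alpha_1,\alpha_2)$ acts on $A\otimes\mathbb M_2(\mathbb C)$ by $(\alpha_1,\alpha_2)\cdot\left[\begin{smallmatrix} a & b\\ c& d\end{smallmatrix}\right]=\left[\begin{smallmatrix}\alpha_1\cdot a & \alpha_1\cdot b\\ \alpha_2\cdot c & \alpha_2\cdot d\end{smallmatrix}\right]$, and likewise for the right $\mathfrak B\oplus\mathfrak B$-action; then $\Theta$ intertwines these actions entrywise precisely because each $\theta_i$ is a bimodule map, a one-line verification that uses no positivity. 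For $(ii)$, I would write $\theta_1(a)=W^*\Theta(a\otimes E_{11})W$, where $W\colon B\to B\otimes\mathbb C^2$ is the isometry onto the first coordinate and $a\mapsto a\otimes E_{11}$ is a $*$-homomorphism; since $\Theta$ is c.p. and compression by $W$ is c.p., $\theta_1$ is c.p., and it is a bimodule map by hypothesis. The map $\theta_4$ is identical with $E_{22}$.

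For the identity $\theta_2=\theta_3^*$ in $(iii)$ I would use that a c.p. map is self-adjoint, so $\Theta(X^*)=\Theta(X)^*$; evaluating on $X=b\otimes E_{12}$ gives $\theta_3(b^*)=\theta_2(b)^*$, i.e. $\theta_2=\theta_3^*$. Complete boundedness of $\theta_2$ I would defer until after $(v)$, since it follows from the Schwarz inequality there: $\|\theta_{2n}(x)\|^2=\|\theta_{2n}(x)\theta_{2n}(x)^*\|\le\|\Theta\|_{cb}\,\|\theta_{1n}(xx^*)\|\le\|\Theta\|_{cb}^2\|x\|^2$, uniformly in $n$, whence $\|\theta_2\|_{cb}\le\|\Theta\|_{cb}$. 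For $(iv)$, given $|\lambda|=1$ I would use that $\left[\begin{smallmatrix}1 & \lambda\\ \bar\lambda & 1\end{smallmatrix}\right]$ is positive and that Schur products of positive matrices are positive, so $\left[\begin{smallmatrix} a & \lambda a\\ \bar\lambda a& a\end{smallmatrix}\right]\ge0$ for $a\ge0$; applying $\Theta$ and compressing by the vector $(1,-1)$ gives $\theta_1(a)+\theta_4(a)-\big(\lambda\theta_2(a)+\bar\lambda\theta_2(a)^*\big)\ge0$, which is exactly $\mathrm{Re}(\lambda\theta_2)(a)\le\tfrac12(\theta_1+\theta_4)(a)$ once $\theta_3(a)=\theta_2(a)^*$ from $(iii)$ is inserted, the matrix version following by amplification.

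The heart is $(v)$. For $i=1$ I would start from the rank-one positive matrix $\left[\begin{smallmatrix} xx^* & x\\ x^* & 1\end{smallmatrix}\right]=\left[\begin{smallmatrix}x\\1\end{smallmatrix}\right]\left[\begin{smallmatrix}x^* & 1\end{smallmatrix}\right]\ge0$ in $\mathbb M_2(A)$, apply $\Theta$, and use $\theta_3(x^*)=\theta_2(x)^*$ to reach the positive operator matrix $\left[\begin{smallmatrix}\theta_1(xx^*) & \theta_2(x)\\ \theta_2(x)^* & \theta_4(1)\end{smallmatrix}\right]\ge0$; the standard factorization of a positive $2\times2$ operator matrix (writing the off-diagonal as $P^{1/2}CQ^{1/2}$ with $C$ a contraction, so $YY^*\le\|Q\|P$) then yields $\theta_2(x)\theta_2(x)^*\le\|\theta_4(1)\|\,\theta_1(xx^*)$. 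Since $\Theta$ is c.p. on the unital algebra $A\otimes\mathbb M_2(\mathbb C)$, one has $\|\Theta\|_{cb}=\|\Theta(I)\|=\max(\|\theta_1(1)\|,\|\theta_4(1)\|)$, so the right-hand side is dominated by $\|\Theta\|_{cb}\theta_1(xx^*)$. To pass to the $n$-th amplification I would reorganize $\mathbb M_2(\mathbb M_n(\,\cdot\,))\cong\mathbb M_n(\mathbb M_2(\,\cdot\,))$ by the canonical flip, under which $\Theta_n$ becomes the block map with entries $\theta_{in}$, is again c.p., and has the same cb-norm; running the same argument over the base algebra $A\otimes\mathbb M_n(\mathbb C)$ gives $(v)$ for $i=1$. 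The case $i=4$ I would obtain by symmetry, conjugating $\Theta$ by the swap unitary $\left[\begin{smallmatrix}0&1\\1&0\end{smallmatrix}\right]$, which produces a c.p. map of the same cb-norm with the two diagonal corners interchanged, so that the $i=1$ inequality for the swapped map delivers the $i=4$ inequality.

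The main obstacle I anticipate is organizational rather than deep: keeping the two Schwarz-type inequalities straight, since the factorization naturally dominates $\theta_2(x)\theta_2(x)^*$ by the $(1,1)$-corner and $\theta_2(x)^*\theta_2(x)$ by the $(2,2)$-corner, with $xx^*$ versus $x^*x$ on the two sides; the $i=4$ case must be routed through the swap conjugation with this ordering tracked carefully. I would also make sure the identification $\|\Theta\|_{cb}=\|\Theta(I)\|$ and its invariance under amplification are invoked correctly. The module structure itself never obstructs any of these steps: corner compression, Schur products, the swap conjugation, and amplification are all implemented by scalar matrix-unit data that commute with the $\mathfrak A$- and $\mathfrak B$-actions, so each map produced stays a bimodule map for the relevant direct-sum algebras.
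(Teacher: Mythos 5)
Your parts $(i)$--$(iv)$ and the $i=1$ half of $(v)$ are correct and are in substance the paper's own argument: the paper also proves $(i)$ via the diagonal identification of $\mathfrak A\oplus\mathfrak A$ and $\mathfrak B\oplus\mathfrak B$ inside $2\times2$ matrices, gets $(ii)$ and $(iv)$ by compressing $\Theta$ evaluated at suitable $2\times2$ elements (it uses module matrix units $E^{ij}_{\mathfrak A}$, $E^{ij}_{\mathfrak B}$ where you use scalar matrix units and the isometry $W$ --- an immaterial difference, since scalar compressions automatically respect the module actions), and $(iii)$ from hermitianness of c.p.\ maps together with the corner estimate $\|\theta_2\|_{cb}\le\|\Theta(1_{A\otimes\mathbb M_2(\mathbb C)})\|$. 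Your $2\times2$ factorization lemma, $\bigl[\begin{smallmatrix}P&Y\\Y^*&Q\end{smallmatrix}\bigr]\ge0\Rightarrow YY^*\le\|Q\|P$, is just a repackaging of the Schwarz inequality that the paper invokes in one line for $(v)$; and both you and the paper prove the non-strict inequality in $(iv)$ (the strict ``$<$'' in the statement is evidently a typo).

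The genuine problem is the $i=4$ case of $(v)$, which you defer to ``swap conjugation with the ordering tracked carefully.'' Tracking it shows the swap cannot deliver what is claimed: conjugating by $U=\bigl[\begin{smallmatrix}0&1\\1&0\end{smallmatrix}\bigr]$ produces a c.p.\ map whose $(1,2)$-corner is $\theta_3$, not $\theta_2$, so the $i=1$ inequality applied to the swapped map yields $\|\Theta\|_{cb}\,\theta_{4n}(xx^*)\ge\theta_{3n}(x)\theta_{3n}(x)^*$, equivalently (using $\theta_3=\theta_2^*$ and replacing $x$ by $x^*$) $\|\Theta\|_{cb}\,\theta_{4n}(x^*x)\ge\theta_{2n}(x)^*\theta_{2n}(x)$. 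This is not the printed inequality, and no amount of bookkeeping will produce it, because the printed $i=4$ inequality is false: take $\mathfrak A=\mathfrak B=\mathbb C$, $A=B=\mathbb B(H)$, $V=\mathrm{diag}(1,u)$ with $u$ unitary, and $\Theta=ad_V$; then $\theta_1=\mathrm{id}$, $\theta_2(x)=xu$, $\theta_4(d)=u^*du$, $\|\Theta\|_{cb}=1$, and the claim for $i=4$, $n=1$ reads $u^*xx^*u\ge xx^*$ for all $x$, which already fails for $H=\mathbb C^2$, $x=E_{11}$, $u$ the flip unitary. So the defect lies in the statement rather than in your strategy --- the paper's own one-line proof (``a direct consequence of the Schwarz inequality'') establishes exactly the corrected pair $\theta_{2n}(x)\theta_{2n}(x)^*\le\|\Theta\|_{cb}\,\theta_{1n}(xx^*)$ and $\theta_{2n}(x)^*\theta_{2n}(x)\le\|\Theta\|_{cb}\,\theta_{4n}(x^*x)$ and no more --- but your write-up should state and prove this corrected form instead of promising the literal one.
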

\begin{proof}
For part $(i)$, one just needs to  consider $A\otimes \mathbb M_2(\mathbb C)$ as an $(\mathfrak A\oplus \mathfrak A)$-$(\mathfrak B\oplus \mathfrak B)$-bimodule vis the identifications
\[\alpha_1\oplus \alpha_2=\left[
                                                                       \begin{array}{cc}
                                                                         \alpha_1 & 0 \\
                                                                         0 & \alpha_2 \\
                                                                       \end{array}
                                                                     \right], \ \  \beta_1\oplus \beta_2=\left[
                                                                       \begin{array}{cc}
                                                                         \beta_1 & 0 \\
                                                                         0 & \beta_2 \\
                                                                       \end{array}
                                                                     \right],
                                                                     \]
for $ \alpha_i\in\mathfrak A,   \beta_i\in\mathfrak B; \ i=1,2.$ For part $(ii)$, let
\[\bar a= \left[
                                                                       \begin{array}{cc}
                                                                         a & a \\
                                                                         a & a \\
                                                                       \end{array}
                                                                     \right],\ \
E^{11}_{\mathfrak A}=\left[
                                                                       \begin{array}{cc}
                                                                         1_{\mathfrak A} & 0 \\
                                                                         0 & 0 \\
                                                                       \end{array}
                                                                     \right],
                                                                     \]
define the other matrix units in $\mathfrak A\otimes \mathbb M_2(\mathbb C)$ similarly (and the same for $\mathfrak B$), and observe that
\[E^{11}_{\mathfrak A}\cdot\Theta(\bar a)\cdot E^{11}_{\mathfrak B}=\left[
                                                                       \begin{array}{cc}
                                                                         \theta_1(a) & 0 \\
                                                                         0 & 0 \\
                                                                       \end{array}
                                                                     \right],\ \
E^{22}_{\mathfrak A}\cdot\Theta(\bar a)\cdot E^{22}_{\mathfrak B}=\left[
                                                                       \begin{array}{cc}
                                                                         0 & 0 \\
                                                                         0 & \theta_4(a) \\
                                                                       \end{array}
                                                                     \right].
\]
Part $(iii)$ follows from $\Theta(\bar a^*)=\Theta(\bar a)^*$ and the fact that $\|\theta_2\|_{cb}\leq \|\Theta(1_{A\otimes \mathbb M_2(\mathbb C)})\|$. Part $(iv)$ follows from the fact that the element $$(E^{11}_{\mathfrak A}+E^{12}_{\mathfrak A})(\pm\lambda E^{11}_{\mathfrak A}+E^{22}_{\mathfrak A})\Theta(\bar a)(\pm\bar\lambda E^{11}_{\mathfrak B}+E^{22}_{\mathfrak B})(E^{11}_{\mathfrak B}+E^{21}_{\mathfrak A})$$
  is positive in $B\otimes \mathbb M_2(\mathbb C)$. Finally, part $(v)$ is a direct consequence of  the Schwartz inequality for c.p.  maps.                                                           \end{proof}

Next we extend the Wittstock theorem for completely positive maps into injective $C^*$-algebras \cite[Satz 4.5]{w1} (here, we adapt the proof of \cite[Theorem 2.4]{s}). We need the following extension of \cite[Satz 4.2]{w1}. Let $A, B$ be as in the above lemma, following Wittstock \cite[Definition 4.1]{w1}, we say that a linear hermitian map $\theta: A\to B$ is $\mathfrak A$-$\mathfrak B$-matricially bounded if there is $k>0$ such that for each $n\geq 1$, self-adjoint elements $\alpha\in\mathbb M_n(\mathfrak A), \beta\in\mathbb M_n(\mathfrak B)$ and $a\in\mathbb M_n(A)$ with $\pm a\leq \alpha\cdot 1_{A}\cdot \beta$ we have $\pm\theta_n(a)\leq k(\alpha\cdot 1_{B}\cdot \beta)$.

\begin{lemma} \label{hom}
Let $\mathfrak A$ and $\mathfrak B$ are unital $C^*$-algebras and $A$ and $B$ be unital bimodules. For a  linear hermitian  map $\theta: A\to B$, the following are equivalent:

 $(i)$ \ $\theta$ is $\mathfrak A$-$\mathfrak B$-matricially bounded,

 $(ii)$ \ $\theta$ is $\mathfrak A$-$\mathfrak B$-bimodule map.
\end{lemma}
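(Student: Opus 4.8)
The plan is to prove the equivalence $(i)\Leftrightarrow(ii)$ by establishing the two implications separately, with the harder direction being $(i)\Rightarrow(ii)$.

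For the direction $(ii)\Rightarrow(i)$, I would argue directly. Suppose $\theta$ is an $\mathfrak A$-$\mathfrak B$-bimodule map; I claim it is $\mathfrak A$-$\mathfrak B$-matricially bounded with constant $k=\|\theta\|_{cb}$ (noting that a bimodule map of this type, being hermitian and bounded, is completely bounded). Fix $n\geq 1$, self-adjoint $\alpha\in\mathbb M_n(\mathfrak A)$, $\beta\in\mathbb M_n(\mathfrak B)$ and $a\in\mathbb M_n(A)$ with $\pm a\leq \alpha\cdot 1_A\cdot\beta$. The point is that the amplified map $\theta_n$ is an $\mathbb M_n(\mathfrak A)$-$\mathbb M_n(\mathfrak B)$-bimodule map, so $\theta_n(\alpha\cdot 1_A\cdot\beta)=\alpha\cdot\theta_n(1_A)\cdot\beta=\alpha\cdot 1_B\cdot\beta$ using that $B$ is unital and $\theta$ is unital as a bimodule map (here I would invoke neo-unitality so that $1_A$ maps appropriately, or absorb this into the normalization). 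Applying $\theta_n$ to the inequality $\alpha\cdot 1_A\cdot\beta \mp a\geq 0$ and using positivity of $\theta_n$ on the relevant cone yields $\pm\theta_n(a)\leq k(\alpha\cdot 1_B\cdot\beta)$. The main subtlety here is ensuring $\theta_n$ preserves the order between the two module-sandwich elements, which I expect to handle via the compatibility conditions and the centrality hypotheses that make $\alpha\cdot 1_A\cdot\beta$ a genuine positive element.

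The substantive direction is $(i)\Rightarrow(ii)$: assuming $\theta$ is matricially bounded, I must show it preserves the module actions, i.e. $\theta(\alpha\cdot a)=\alpha\cdot\theta(a)$ and $\theta(a\cdot\beta)=\theta(a)\cdot\beta$. The strategy is to exploit the matricial bound in small matrix sizes to pin down the interaction of $\theta$ with scalar module elements. Taking $n=2$ and using $2\times 2$ test matrices built from a single element $a$ and a module scalar $\alpha$ (placed so that $\alpha\cdot 1_A$ dominates the off-diagonal entry $a$), the inequality $\pm a\leq\alpha\cdot 1_A\cdot 1_B$ forces a corresponding inequality on $\theta_2$, from which one extracts that $\theta(\alpha\cdot a)-\alpha\cdot\theta(a)$ must vanish. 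Concretely, I would consider elements of the form $\begin{bmatrix}\alpha\cdot 1_A & \alpha\cdot a\\ a^*\cdot\alpha & \alpha\cdot 1_A\end{bmatrix}$-type sandwiches and squeeze the bound to zero by rescaling $\alpha$, much as one proves that a matricially bounded map in the scalar ($\mathbb C$) case is automatically a $\mathbb C$-module (i.e. linear) map in Wittstock's original \cite[Satz 4.2]{w1}.

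The main obstacle I anticipate is the noncommutativity of the module actions: unlike the scalar case in \cite{w1}, here $\mathfrak A$ and $\mathfrak B$ are genuine $C^*$-algebras and the sandwich $\alpha\cdot 1_A\cdot\beta$ need not behave like a simple positive scalar multiple of $1$. Controlling when $\alpha\cdot 1_A\cdot\beta$ is positive and how it dominates $a$ requires the centrality assumptions on the bimodules, and I expect the cleanest route is to reduce to self-adjoint $\alpha,\beta$ and use a polarization/rescaling argument, letting the matricial constant $k$ force the commutator-type defect $\theta(\alpha\cdot a)-\alpha\cdot\theta(a)$ to be arbitrarily small in norm, hence zero. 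The right-module identity follows by a symmetric argument. Throughout, the neo-unitality of the bimodules is what allows me to recover the action on a general element from the action on $1_A$.
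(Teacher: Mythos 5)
Your proof of the direction $(ii)\Rightarrow(i)$ rests on two properties that $\theta$ does not have. You apply $\theta_n$ to $\alpha\cdot 1_A\cdot\beta\mp a\geq 0$ ``using positivity of $\theta_n$,'' and you compute $\theta_n(\alpha\cdot 1_A\cdot\beta)=\alpha\cdot 1_B\cdot\beta$ ``using that $\theta$ is unital.'' But the lemma assumes only that $\theta$ is linear and hermitian: it is neither positive nor unital (for instance $\theta=-\mathrm{id}$ is a hermitian bimodule map, certainly not positive, yet matricially bounded --- which already shows positivity cannot be the mechanism). The paper's argument needs neither property: from $\pm a\leq\alpha\cdot 1_A\cdot\beta$ one conjugates $a$ through the module action by $(\alpha+\varepsilon 1_{\mathfrak A})^{-1/2}$ and $(\beta+\varepsilon 1_{\mathfrak B})^{-1/2}$ to get an element $a_\varepsilon$ of norm at most $1$, applies $\|\theta_n(a_\varepsilon)\|\leq\|\theta\|_{cb}$, uses that a self-adjoint element is dominated by its norm times the unit, and conjugates back, giving the matricial bound with $k=\|\theta\|_{cb}$.

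The substantive direction $(i)\Rightarrow(ii)$ is where your proposal genuinely fails: the ``squeeze the bound to zero by rescaling $\alpha$'' mechanism cannot work, because matricial boundedness is homogeneous in $\alpha$. Replacing $\alpha$ by $t\alpha$ scales the hypothesized bound $k(\alpha\cdot 1_B\cdot\beta)$ and the defect $\theta(\alpha\cdot a)-\alpha\cdot\theta(a)$ by the same factor $t$, so rescaling never improves the estimate; you can conclude the defect is bounded, never that it vanishes. What actually kills the defect in the paper is an exact algebraic cancellation available only for \emph{projections}: for projections $p\in\mathfrak A$, $q\in\mathfrak B$, one multiplies the inequality $\pm\left[\begin{smallmatrix}0&a\\a^*&0\end{smallmatrix}\right]\leq\left[\begin{smallmatrix}1_A&0\\0&1_A\end{smallmatrix}\right]$ on both sides by $\mathrm{diag}(p\cdot 1_A,\,1_A\cdot q)$, applies matricial boundedness, and then multiplies by $\mathrm{diag}((1-p)\cdot 1_B,\,1_B)$; since $p(1-p)=0$, the resulting positive $2\times 2$ matrix has a zero diagonal block, which forces the corner $(1-p)\cdot\theta(p\cdot a\cdot q)$ to vanish \emph{exactly}. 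Four such identities combine to give $\theta(p\cdot a\cdot q)=p\cdot\theta(a)\cdot q$ for all projections. Since a $C^*$-algebra need not be spanned by its projections, the paper then passes to $\theta^{**}:A^{**}\to B^{**}$, uses Kaplansky density to see that matricial boundedness persists for $\mathfrak A^{**}$ and $\mathfrak B^{**}$, runs the projection argument in the biduals (von Neumann algebras, where projections do span), and restricts back. Both essential ideas --- the projection/corner-vanishing trick and the bidual step --- are absent from your proposal, and without them (or a genuine substitute) the implication is not proved. A smaller point: you repeatedly invoke ``centrality hypotheses,'' but the lemma assumes no centrality, only unitality of the bimodules.
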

\begin{proof}
If $(i)$ holds and $a\in A$, $\|a\|=1$, then multiplying from both sides of
\[\pm\left[
                                                                       \begin{array}{cc}
                                                                         0 & a \\
                                                                         a^* & 0 \\
                                                                       \end{array}
                                                                     \right]\leq
\left[
                                                                       \begin{array}{cc}
                                                                         1_A & 0 \\
                                                                         0 & 1_A \\
                                                                       \end{array}
                                                                     \right]
\]
by the self-adjoint element $\left[
                                                                       \begin{array}{cc}
                                                                         p\cdot 1_A & 0 \\
                                                                         0 & 1_A\cdot q \\
                                                                       \end{array}
                                                                     \right]$ for projections $p\in\mathfrak A$ and $q\in\mathfrak B$, using the compatibility conditions, we get
\[\pm\left[
                                                                       \begin{array}{cc}
                                                                         0 & p\cdot a\cdot q \\
                                                                         p\cdot a^*\cdot q & 0 \\
                                                                       \end{array}
                                                                     \right]\leq
\left[
                                                                       \begin{array}{cc}
                                                                         p\cdot 1_A & 0 \\
                                                                         0 & 1_A\cdot q \\
                                                                       \end{array}
                                                                     \right],
\]
thus
\[\pm\left[
                                                                       \begin{array}{cc}
                                                                         0 & \theta(p\cdot a\cdot q) \\
                                                                         \theta(p\cdot a^*\cdot q) & 0 \\
                                                                       \end{array}
                                                                     \right]\leq
k\left[
                                                                       \begin{array}{cc}
                                                                         p\cdot 1_B & 0 \\
                                                                         0 & 1_B\cdot q \\
                                                                       \end{array}
                                                                     \right].
\]
Multiplying from both sides by the self-adjoint element $\left[
                                                                       \begin{array}{cc}
                                                                         (1-p)\cdot 1_A & 0 \\
                                                                         0 & 1_A \\
                                                                       \end{array}
                                                                     \right]$
we get
\[\pm\left[
                                                                       \begin{array}{cc}
                                                                         0 & (1-p)\cdot\theta(p\cdot a\cdot q) \\
                                                                         (1-p)\cdot\theta(p\cdot a^*\cdot q) & 0 \\
                                                                       \end{array}
                                                                     \right]\leq
k\left[
                                                                       \begin{array}{cc}
                                                                         0 & 0 \\
                                                                         0 & 1_B\cdot q \\
                                                                       \end{array}
                                                                     \right].
\]
Hence $(1-p)\cdot\theta(p\cdot a\cdot q)=0$. Similarly, $p\cdot\theta((1-p)\cdot a\cdot q)=0$. By the same argument, $\theta(p\cdot a\cdot q)\cdot (1-q)=0$ and  $\theta(p\cdot a\cdot(1-q))\cdot q=0$. Thus
$$\theta(p\cdot a\cdot q)=p\cdot\theta(p\cdot a\cdot q)+(1-p)\cdot\theta(p\cdot a\cdot q)=p\cdot\theta(p\cdot a\cdot q)+p\cdot\theta((1-p)\cdot a\cdot q)=p\cdot\theta(a\cdot q),$$
and similarly, $\theta(p\cdot a\cdot q)=\theta(p\cdot a)\cdot q$. This establishes $(i)$ for the case that all the $C^*$-algebras involved are von Neumann algebras. Now by the kaplansky density theorem, $\theta^{**}: A^{**}\to B^{**}$ is $\mathfrak A^{**}$-$\mathfrak B^{**}$-matricially bounded, and so, by the above argument, it is an $\mathfrak A^{**}$-$\mathfrak B^{**}$-bimodule map, and $(ii)$ follows.

Conversely, if $(ii)$ holds, given $\varepsilon>0$, $n\geq 1$, self-adjoint elements $\alpha\in\mathbb M_n(\mathfrak A), \beta\in\mathbb M_n(\mathfrak B)$ and $a\in\mathbb M_n(A)$ with $\pm a\leq \alpha\cdot 1_{A}\cdot \beta$, put $a_{\varepsilon}:=(\alpha+\varepsilon 1_{\mathfrak A})\cdot a\cdot (\beta+\varepsilon 1_{\mathfrak B})$. Then $\|a_{\varepsilon}\|=1$ and $\|\theta(a_{\varepsilon})\|\leq\|\theta\|_{cb}$, hence $\pm\theta(a)\leq \|\theta\|_{cb}(\alpha\cdot 1_{B}\cdot \beta)$.
\end{proof}

\begin{theorem}[Wittstock extension theorem] \label{injc}
Let $\mathfrak A$ and $\mathfrak B$ are unital $C^*$-algebras and $A$ and $B$ be unital bimodules. Let $B$ be injective as a $C^*$-algebra. Then for each $\mathfrak A$-$\mathfrak B$-subbimodule $A_0$ of $A$, each c.b.
bimodule map $\theta_0: A_0\to B$  has a c.b. bimodule  extension $\theta: A \to B$ with the same $cb$-norm.
\end{theorem}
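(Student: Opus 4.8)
The plan is to transport C.-Y. Suen's proof of Wittstock's theorem \cite[Theorem 2.4]{s} into the present setting: convert the completely bounded map $\theta_0$ into a single completely positive map on $2\times 2$ matrices, extend the latter using injectivity of $B$, and then recover the module structure of the corner through Lemma \ref{hom}. After rescaling I assume $\|\theta_0\|_{cb}=1$.

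First I would set up Suen's dilation. Form the operator system
$$\mathcal S_0=\left[\begin{array}{cc} \mathbb C\,1_A & A_0\\ A_0^* & \mathbb C\,1_A\end{array}\right]\subseteq\mathbb M_2(A)$$
and the map $\Theta_0:\mathcal S_0\to\mathbb M_2(B)$ sending $\left[\begin{smallmatrix}\lambda 1_A & a\\ b^* & \mu 1_A\end{smallmatrix}\right]$ to $\left[\begin{smallmatrix}\lambda 1_B & \theta_0(a)\\ \theta_0(b)^* & \mu 1_B\end{smallmatrix}\right]$. Since $\|\theta_0\|_{cb}\le 1$, the standard Paulsen criterion (cf. \cite[Lemma 2.3]{s}) shows $\Theta_0$ is unital completely positive, and it is well defined precisely because its diagonal entries are scalar multiples of the units. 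As $B$, hence $\mathbb M_2(B)=\mathbb M_2(\mathbb C)\otimes B$, is injective, $\Theta_0$ extends to a u.c.p. map $\Theta:\mathbb M_2(A)\to\mathbb M_2(B)$. The matrix units $e_{11},e_{22}\in\mathcal S_0$ are projections carried to the projections $e_{11}\otimes 1_B,\ e_{22}\otimes 1_B$, so they lie in the multiplicative domain of $\Theta$; this forces $\Theta$ to preserve the $2\times 2$ grading, giving the block form
$$\Theta\left[\begin{array}{cc} a & b\\ c & d\end{array}\right]=\left[\begin{array}{cc}\theta_1(a)&\theta(b)\\ \theta_3(c)&\theta_4(d)\end{array}\right],$$
where $\theta$ extends $\theta_0$ and $\|\theta\|_{cb}\le\|\Theta(1)\|=1$. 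This already reproduces the classical, non-module conclusion.

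The remaining and main difficulty is that the injective extension $\Theta$ ignores the $\mathfrak A$-$\mathfrak B$-actions, so its corners need not be module maps. Here I would work with the hermitian map $\Theta$ itself and invoke Lemma \ref{hom}: it suffices to prove that $\Theta$ is $(\mathfrak A\oplus\mathfrak A)$-$(\mathfrak B\oplus\mathfrak B)$-matricially bounded with constant $1$, for then $\Theta$ is a bimodule map, its corners are bimodule maps by the grading, and Lemma \ref{inj} applies to conclude that $\theta=\theta_2$ is a c.b. bimodule map. To verify matricial boundedness, note that any sandwich $\pm X\le\Gamma\cdot 1\cdot\Delta$ (with $\Gamma,\Delta$ self-adjoint over $\mathfrak A\oplus\mathfrak A$, $\mathfrak B\oplus\mathfrak B$) means $\Gamma\cdot 1\cdot\Delta\pm X\ge 0$; applying the positive (completely positive) map $\Theta$ at the relevant matrix level yields $\pm\Theta(X)\le\Theta(\Gamma\cdot 1\cdot\Delta)$, so everything reduces to the single identity $\Theta(\Gamma\cdot 1\cdot\Delta)=\Gamma\cdot 1_{\mathbb M_2(B)}\cdot\Delta$.

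This last identity—that the injective extension reproduces the module-scalar bounds exactly—is the crux, and I expect it to be the main obstacle. It is here that the unital central hypotheses on $A$ and $B$ are indispensable: centrality makes $\alpha\mapsto\alpha\cdot 1_A$ and $\alpha\mapsto\alpha\cdot 1_B$ unital $*$-homomorphisms of $\mathfrak A$ into $\mathcal Z(A)$ and $\mathcal Z(B)$ (and likewise for $\mathfrak B$), so that $\overline{\mathfrak A\cdot 1_A\cdot\mathfrak B}$ is a central $C^*$-subalgebra; the plan is to choose the injective extension not arbitrarily but compatibly with a module conditional expectation onto this central subalgebra, which pins $\Theta$ down on the elements $\Gamma\cdot 1\cdot\Delta$. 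Once this compatibility is secured—so that $\Theta$ is genuinely matricially bounded rather than merely completely bounded—Lemma \ref{hom} gives that $\Theta$, and hence $\theta$, is a bimodule map, and the equality $\|\theta\|_{cb}=\|\theta_0\|_{cb}$ follows because $\theta$ extends $\theta_0$.
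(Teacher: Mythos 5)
Your outline follows Suen's strategy, which is indeed the paper's template, but it stalls exactly where you say it does, and that gap is fatal as written. Starting from the scalar-diagonal Paulsen system $\mathcal S_0$, injectivity of $B$ hands you a u.c.p. extension $\Theta:\mathbb M_2(A)\to\mathbb M_2(B)$ that carries \emph{no} information about the $\mathfrak A$-$\mathfrak B$-actions: the elements $\Gamma\cdot 1_{A\otimes\mathbb M_2(\mathbb C)}\cdot\Delta$ do not lie in $\mathcal S_0$ (unless $\Gamma,\Delta$ are scalar), so nothing forces the crux identity $\Theta_n(\Gamma\cdot 1_{A\otimes\mathbb M_2(\mathbb C)}\cdot\Delta)=\Gamma\cdot 1_{B\otimes\mathbb M_2(\mathbb C)}\cdot\Delta$. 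Your proposed remedy --- choosing the extension ``compatibly with a module conditional expectation'' onto the central subalgebra generated by $\mathfrak A\cdot 1_B\cdot\mathfrak B$ --- is circular: an extension compatible with the module structure is precisely what the theorem asserts exists, and plain $C^*$-injectivity of $B$ gives no way to manufacture it (module-injectivity statements appear in the paper only later, as \emph{consequences} of the extension theorems). Note also that you invoke ``unital central hypotheses on $A$ and $B$'' which are not hypotheses of the theorem: in this paper a unital bimodule means a unital $C^*$-algebra which is a neo-unital bimodule, with no centrality assumed, so an argument genuinely resting on centrality would prove a weaker statement.

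The paper closes this gap not by constraining the extension but by enlarging its domain. It takes the operator system $\mathcal S=\left\{\left[\begin{smallmatrix}\alpha & a\\ b^{*} & \beta\end{smallmatrix}\right]:\ \alpha\in\mathfrak A,\ \beta\in\mathfrak B,\ a,b\in A_0\right\}$, in which the diagonal entries range over all of $\mathfrak A$ and $\mathfrak B$ (acting through the units) rather than over $\mathbb C$, and defines $\Theta_0$ to fix those diagonal entries; complete positivity of $\Theta_0$ on this larger system is verified directly from $\|\theta_0\|_{cb}\leq 1$ via the Choi--Effros argument cited as \cite[page 162]{ce}. The payoff is that the module-scalar elements $\Gamma\cdot 1_{A\otimes\mathbb M_2(\mathbb C)}\cdot\Delta$ now lie in the (amplified) domain of $\Theta_0$, where every u.c.p. extension $\Theta$ is forced to agree with $\Theta_0$; your crux identity then holds by construction, matricial boundedness with constant $1$ follows from positivity of $\Theta$ alone, and Lemma \ref{hom} upgrades $\Theta$ to an $(\mathfrak A\oplus\mathfrak A)$-$(\mathfrak B\oplus\mathfrak B)$-bimodule map, whose $(1,2)$ corner is the desired $\theta$. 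In short: your reduction to the identity on $\Gamma\cdot 1\cdot\Delta$ is correct, but the only known way to secure that identity is to build $\mathfrak A$ and $\mathfrak B$ into the diagonal of the operator system \emph{before} extending, not to retrofit an unconstrained extension afterwards.
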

\begin{proof}
Assume that $\|\theta_0\|_{cb}=1$ and put
\[\mathcal S=\{\left[
                                                                       \begin{array}{cc}
                                                                         \alpha & a \\
                                                                         b^* & \beta \\
                                                                       \end{array}
                                                                     \right]:\ \ \alpha\in\mathfrak A, \beta\in \mathfrak B, a,b\in A_0\}. \]
Define $\Theta_0: \mathcal S\to B\otimes \mathbb M_2(\mathbb C)$ by
$$\Theta_0\left[
                                                                       \begin{array}{cc}
                                                                         \alpha & a \\
                                                                         b^* & \beta \\
                                                                       \end{array}
                                                                     \right]
     =\left[
     \begin{array}{cc}
                                                                         \alpha & \theta_0(a) \\
                                                                         \theta_0(b)^* & \beta \\
                                                                       \end{array}
                                                                     \right].$$
For $n\geq 1, \varepsilon >0$ and positive element $\left[
                                                                       \begin{array}{cc}
                                                                         \bar\alpha & \bar a \\
                                                                         \bar a^* & \bar\beta \\
                                                                       \end{array}
                                                                     \right]\in \mathcal S\otimes \mathbb M_n(\mathbb C)$, we have
 \[
 \left[\begin{array}{cc}
                                                                         (\bar\alpha+\varepsilon)^{-\frac{1}{2}} & 0 \\
                                                                         0 & (\bar\beta+\varepsilon)^{-\frac{1}{2}} \\
                                                                       \end{array}
                                                                     \right]\cdot
\left[
                                                                       \begin{array}{cc}
                                                                         \bar\alpha+\varepsilon & \bar a \\
                                                                         \bar a^* & \bar\beta+\varepsilon \\
                                                                       \end{array}
                                                                     \right]\cdot
 \left[\begin{array}{cc}
                                                                         (\bar\alpha+\varepsilon)^{-\frac{1}{2}} & 0 \\
                                                                         0 & (\bar\beta+\varepsilon)^{-\frac{1}{2}} \\
                                                                       \end{array}
                                                                     \right]\geq 0,
\]
in $\mathcal S\otimes \mathbb M_n(\mathbb C)$. By \cite[page 162]{ce},
\[\|\theta_{0n}\big((\bar\alpha+\varepsilon)^{-\frac{1}{2}} \bar a(\bar\beta+\varepsilon)^{-\frac{1}{2}}\big)\|\leq\|(\bar\alpha+\varepsilon)^{-\frac{1}{2}} \bar a(\bar\beta+\varepsilon)^{-\frac{1}{2}}\|\leq 1,\]
thus
$\left[
                                                                       \begin{array}{cc}
                                                                         \bar\alpha & \bar a \\
                                                                         \bar a^* & \bar\beta \\
                                                                       \end{array}
                                                                     \right]$
is positive in $\mathcal S\otimes \mathbb M_n(\mathbb C)$. Therefore, $\Theta_0$ is c.p. and so extends to a u.c.p. map $\Theta: A\otimes\mathbb M_2(\mathbb C)\to B\otimes \mathbb M_2(\mathbb C)$. Let us show that $\Theta$ is $(\mathfrak A\oplus\mathfrak A)$-$(\mathfrak B\oplus\mathfrak B)$-matricially bounded. Given $n\geq 1$, self-adjoint elements $\alpha\in\mathbb M_n(\mathfrak A\oplus\mathfrak A), \beta\in\mathbb M_n(\mathfrak B\oplus\mathfrak B)$ and $a\in A\otimes\mathbb M_2(\mathbb C)\otimes\mathbb M_n(\mathbb C)$ with $\pm a\leq \alpha\cdot 1_{A}\cdot \beta$, we have $\Theta_n(\alpha\cdot 1_{A\otimes\mathbb M_2(\mathbb C)}\cdot \beta\pm a)\geq 0$. On the other hand,
\[\Theta_n(\alpha\cdot 1_{A\otimes\mathbb M_2(\mathbb C)}\cdot \beta)
=\left[
     \begin{array}{cc}
                                                                         \alpha & \theta_0(0) \\
                                                                         \theta_0(0)^* & \beta \\
                                                                       \end{array}
                                                                     \right]=\alpha\cdot
\left[
                                                                     \begin{array}{cc}
                                                                         1_B & 0 \\
                                                                         0 & 1_B \\
                                                                       \end{array}
                                                                     \right]\cdot\beta=\alpha\cdot 1_{B\otimes\mathbb M_2(\mathbb C)}\cdot \beta,
\]
hence, $\pm\Theta_n(a)\leq \alpha\cdot 1_{B\otimes\mathbb M_2(\mathbb C)}\cdot \beta$. Therefore, by Lemma \ref{hom}, $\Theta$ is an $(\mathfrak A\oplus\mathfrak A)$-$(\mathfrak B\oplus\mathfrak B)$-bimodule map. Define $\theta: A\to B$ by
\[\Theta\left[
                                                                       \begin{array}{cc}
                                                                         0 & a \\
                                                                         0 & 0 \\
                                                                       \end{array}
                                                                     \right]=\left[
                                                                       \begin{array}{cc}
                                                                         * & \theta(a) \\
                                                                         * & * \\
                                                                       \end{array}
                                                                     \right].
                                                                     \]
This is clearly an extension of $\theta_0$ to an $\mathfrak A$-$\mathfrak B$-bimodule map with the same $cb$-norm.
\end{proof}

To state and prove the module version of Stinespring dilation theorem we need the notion, adapted from  $W^*$-correspondence, introduced by Alain Connes \cite{c} and developed by Sorin Popa \cite{po}. For $C^*$-algebras $\mathfrak A$ and $\mathfrak B$, a $\mathfrak A$-$\mathfrak B$-Hilbert space is a Hilbert space $H$ with two representations $\pi_{\mathfrak A}: \mathfrak A\to \mathbb B(H)$ and $\pi_{\mathfrak B^{op}}: \mathfrak B^{op}\to \mathbb B(H)$ with commuting ranges, where  $\mathfrak B^{op}$ is the opposite algebra of $\mathfrak B$. We write $\alpha\cdot\xi\cdot\beta$ for $\pi_{\mathfrak A}(\alpha)\pi_{\mathfrak B^{op}}(\beta)\xi$, for $\alpha\in \mathfrak A$, $\beta\in \mathfrak B$ and $\xi\in H$. A morphism in the category of $\mathfrak A$-$\mathfrak B$-Hilbert spaces is a bounded linear bimodule map.

If $H$ is a $\mathfrak B$-$\mathfrak A$-Hilbert space, $\mathbb B(H)$ is a $\mathfrak A$-$\mathfrak B$ $C^*$-module in our sense, with module actions $\alpha\cdot T\cdot\beta(\xi)=T(\beta\cdot\xi\cdot\alpha),$ for $T\in \mathbb B(H)$.

\begin{theorem}[Stinespring dilation theorem] \label{dial}
Let $\mathfrak A$ and $\mathfrak B$ are unital $C^*$-algebras and $A$  be a unital $\mathfrak A$-$\mathfrak B$-bimodule. Let $\mathcal H$ be a $\mathfrak B$-$\mathfrak A$-Hilbert space and $\theta: A\to  \mathbb B(\mathcal H)$ be a c.p. bimodule map. Then there is a $\mathfrak B$-$\mathfrak A$-Hilbert space $\mathcal K$ and a representation $\pi:A\to  \mathbb B(\mathcal K)$, which is also an $\mathfrak A$-$\mathfrak B$-bimodule map, and  a bounded linear bimodule map $V: \mathcal H\to\mathcal K$  such that $\theta(a)=V^*\pi(a)V$, for $a\in A$.
\end{theorem}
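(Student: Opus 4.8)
The plan is to run the classical GNS--Stinespring construction first and then fit the module actions on top of it. I would form the algebraic tensor product $A\odot\mathcal H$ and equip it with the sesquilinear form
\[
\Big\langle \sum_i a_i\otimes\xi_i,\ \sum_j b_j\otimes\eta_j\Big\rangle=\sum_{i,j}\langle\theta(b_j^*a_i)\xi_i,\eta_j\rangle_{\mathcal H}.
\]
Complete positivity of $\theta$ says exactly that each matrix $[\theta(b_j^*b_i)]$ is positive in $\mathbb M_n(\mathbb B(\mathcal H))$, so this form is positive semi-definite. Dividing out the null space $N=\{u:\langle u,u\rangle=0\}$ and completing yields a Hilbert space $\mathcal K$. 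I would then define $\pi(a)$ on elementary tensors by $\pi(a)(b\otimes\xi)=ab\otimes\xi$; boundedness and the fact that $\pi$ is a unital $*$-representation follow in the usual way from positivity of $[\theta(b_j^*(\|a\|^2 1_A-a^*a)b_i)]$ together with $\theta(1_A)\le\|\theta\|1$. Setting $V\xi=1_A\otimes\xi$ gives a bounded operator with $V^*\pi(a)V=\theta(a)$, since $\langle\pi(a)V\xi,V\eta\rangle=\langle\theta(a)\xi,\eta\rangle$. This reproduces the non-module content of the theorem.

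The genuinely new part is the bimodule structure. I would make $\mathcal K$ into a $\mathfrak B$--$\mathfrak A$-Hilbert space by transporting the actions on $\mathcal H$ through the second leg,
\[
\beta\cdot(a\otimes\xi)\cdot\alpha:=a\otimes(\beta\cdot\xi\cdot\alpha),
\]
so that the representations $\pi_{\mathfrak B}$ and $\pi_{\mathfrak A^{op}}$ inherit their commuting ranges from $\mathcal H$. With this choice $V$ is tautologically a bimodule map, because $V(\beta\cdot\xi\cdot\alpha)=1_A\otimes(\beta\cdot\xi\cdot\alpha)=\beta\cdot V\xi\cdot\alpha$. For $\pi$, the bimodule identity $\pi(\alpha\cdot a\cdot\beta)=\alpha\cdot\pi(a)\cdot\beta$ unwinds, using the module action on $\mathbb B(\mathcal K)$ and the bimodule property of $\theta$, to the single algebraic relation
\[
(\alpha\cdot a\cdot\beta)\otimes\xi=a\otimes(\beta\cdot\xi\cdot\alpha)\quad\text{in }\mathcal K .
\]
I would establish this by comparing inner products against an arbitrary $b\otimes\eta$ and invoking $\theta(\alpha\cdot x\cdot\beta)=\alpha\cdot\theta(x)\cdot\beta$, which reduces it to the identity $b^*(\alpha\cdot a\cdot\beta)=\alpha\cdot(b^*a)\cdot\beta$ inside $A$.

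The main obstacle is exactly this last reduction: pushing the $\mathfrak A$- and $\mathfrak B$-actions through the internal multiplication of $A$ is \emph{not} free from the bare compatibility axioms $\alpha\cdot(xy)=(\alpha\cdot x)y$ and $(xy)\cdot\beta=x(y\cdot\beta)$, and it is here that the centrality and neo-unitality hypotheses of the running bimodule convention must be brought to bear, so that the left and right actions commute with the algebra multiplication and the balancing relation above is consistent with the inner product. Once the relation is in place, $\pi$ is verified to be both an $\mathfrak A$--$\mathfrak B$-bimodule map and a $*$-representation, $\mathcal K$ is a genuine $\mathfrak B$--$\mathfrak A$-Hilbert space, and $\theta(a)=V^*\pi(a)V$ completes the argument. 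I would expect the bulk of the writing to be this module bookkeeping, the remainder being the standard Stinespring machinery.
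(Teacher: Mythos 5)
Your strategy is viable, but it is genuinely different from the paper's. The paper never redoes the GNS--Stinespring construction: it forms the balanced tensor product $\hat{\mathcal H}=A\otimes_{\mathfrak B}\mathcal H$, invokes the \emph{classical} Stinespring theorem as a black box to get $\hat\pi:A\to\mathbb B(\hat{\mathcal H})$ with $\theta(a)=W^*\hat\pi(a)W$, and then forms a second balanced tensor product $\mathcal K=A^{op}\otimes_{\mathfrak A}\hat{\mathcal H}$ carrying $\pi(a)\bigl(\sum_i b_i\otimes\eta_i\bigr)=\sum_i ab_i\otimes\eta_i$, so that the $\mathfrak B$- and $\mathfrak A$-actions are absorbed one at a time by the two tensor legs; the composite $\mathcal H\to\hat{\mathcal H}\to\mathcal K$ plays the role of your single $V$. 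You instead run the Stinespring construction once on $A\odot\mathcal H$ and transport \emph{both} actions onto the Hilbert-space leg. Your route is more self-contained and arguably cleaner, but it obliges you to verify by hand two things the paper's iterated-tensor-product formalism is designed to package.

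Those are exactly the two places where your write-up needs repair. First, your diagnosis of the ``main obstacle'' is off: centrality is neither assumed in the theorem (the paper's convention for a unital bimodule asks only that $A$ be a unital $C^*$-algebra and a neo-unital bimodule) nor needed. The relation $b^*(\alpha\cdot a\cdot\beta)=\alpha\cdot(b^*a)\cdot\beta$ \emph{is} a consequence of the standing compatibility axioms, because those include $(\alpha\cdot a)^*=\alpha^*\cdot a^*$: indeed $b^*(\alpha\cdot c)=\bigl((\alpha^*\cdot c^*)b\bigr)^*=\bigl(\alpha^*\cdot(c^*b)\bigr)^*=\alpha\cdot(b^*c)$, and symmetrically on the $\mathfrak B$ side. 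So your key identity $(\alpha\cdot a\cdot\beta)\otimes\xi=a\otimes(\beta\cdot\xi\cdot\alpha)$ does close via the inner-product computation you describe, with no extra hypotheses. Second, you treat the well-definedness of the transported actions $\beta\cdot(a\otimes\xi)\cdot\alpha:=a\otimes(\beta\cdot\xi\cdot\alpha)$ on $\mathcal K$ as tautological; it is not. For a general positive matrix $P=[\theta(a_j^*a_i)]$ and a general operator $D$ on $\mathcal H$ one does \emph{not} have $(1_n\otimes D)^*P(1_n\otimes D)\le\|D\|^2P$, which is what descending to the quotient-completion requires. What saves you is that the bimodule property of $\theta$, the paper's module structure on $\mathbb B(\mathcal H)$ (namely $\alpha\cdot T\cdot\beta=T\,\pi_{\mathfrak B}(\beta)\pi_{\mathfrak A^{op}}(\alpha)$), and the $*$-preservation of c.p.\ maps force the range of $\theta$ to \emph{commute} with $D:=\pi_{\mathfrak B}(\beta)\pi_{\mathfrak A^{op}}(\alpha)$: one has $\theta(a)D=\theta(\alpha\cdot a\cdot\beta)$, while taking adjoints in $\theta(\alpha^*\cdot a^*\cdot\beta^*)=\theta(a^*)D^*$ gives $D\theta(a)=\theta(\alpha\cdot a\cdot\beta)$ as well. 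Granting this, $(1_n\otimes D)^*P(1_n\otimes D)=P(1_n\otimes D^*D)\le\|D\|^2P$ because $P$ and $1_n\otimes D^*D$ are commuting positive operators, so your actions are well defined and bounded, and the rest of your argument goes through.
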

\begin{proof}
It is clear  that $\hat{\mathcal H}:=A\otimes_{\mathfrak B}\mathcal H$ is an $\mathfrak A$-$\mathfrak A$-Hilbert space. Note that the left and right actions of $\mathfrak A$ on $\hat{\mathcal H}$ commute. By the classical Stinespring dilation theorem, there is a  representation $\hat\pi:A\to  \mathbb B(\hat{\mathcal H})$ and   bounded linear map $W: \mathcal H\to\hat{\mathcal H}; \xi\mapsto 1_A\otimes \xi,$  such that $\theta(a)=W^*\hat\pi(a)W$, for $a\in A$. Then $\hat\pi^{op}:A^{op}\to  \mathbb B(\hat{\mathcal H})$ is a c.p. map, so we may form the $\mathfrak B$-$\mathfrak A$-Hilbert space $\mathcal K=A^{op}\otimes_{\mathfrak A}\hat{\mathcal H}$. Let us define $\pi: A\to  \mathbb B(\mathcal K)$ by
$$\pi(a)(\sum_i b_i\otimes \eta_i)=\sum_i ab_i\otimes \eta_i,$$
for $a\in A$ and finite sums in the algebraic tensor product. Then for the  bounded linear map $V: \hat{\mathcal H}\to \mathcal K; \eta\mapsto 1_A\otimes \eta,$ we have, $\hat\pi(a)=V^*\pi(a)V$, and so $\theta(a)=(WV)^*\hat\pi(a)WV$, for $a\in A$. We only need to check that $\pi$ is a bimodule map. We have,
\begin{align*}
\pi(\alpha\cdot a)(\sum_i b_i\otimes \eta_i)&=\sum_i (\alpha\cdot a)b_i\otimes \eta_i=\sum_i \alpha\cdot ab_i\otimes \eta_i\\
&=\sum_i  ab_i\cdot_{op}\alpha\otimes \eta_i=\sum_i  ab_i\otimes \alpha\cdot\eta_i\\
&=\sum_i  ab_i\otimes\eta_i\cdot \alpha=(\sum_i  ab_i\otimes\eta_i)\cdot \alpha\\
&=\pi(a)\big((\sum_i b_i\otimes \eta_i)\cdot\alpha\big)\\&=(\alpha\cdot\pi(a))(\sum_i b_i\otimes \eta_i),
\end{align*}
and
\begin{align*}
\pi( a\cdot\beta)(\sum_i b_i\otimes \eta_i)&=\sum_i (a\cdot\beta)b_i\otimes \eta_i=\sum_i (\beta\cdot_{op}a)b_i\otimes \eta_i\\
&=\sum_i \beta\cdot_{op}(ab_i)\otimes \eta_i=\sum_i  (ab_i)\cdot\beta\otimes \eta_i\\
&=\pi(a)(\sum_i b_i\cdot\beta\otimes \eta_i)=\pi(a)(\sum_i \beta\cdot_{op}b_i\otimes \eta_i)\\
&=\pi(a)\big(\beta\cdot_{op}(\sum_i \beta\cdot_{op}b_i\otimes \eta_i)\big)\\&=(\pi(a)\cdot\beta)(\sum_i \beta\cdot_{op}b_i\otimes \eta_i),
\end{align*}
as claimed.
\end{proof}

\begin{remark}
$(i)$ In the non unital case, we could get the same result under the assumption that $\theta$ is {\it strict}, that is, $\{\theta(e_i)\}$ is strictly Cauchy in $\mathbb B(H)$ for some bounded approximate identity $(e_i)$ of $A$. This is weaker than non degeneracy and is automatic in the unital case, or when $\mathfrak A=\mathbb C$ or $\mathfrak B=\mathbb C$ (i.e., $\mathcal H$ is a Hilbert space). This could be proved as in \cite[Theorem 5.6]{lan} (with slight modifications, as in \cite{lan} the c.p. map is not assumed to be a module map.)

$(ii)$ When the dilation is non degenerate (or so called minimal), i,e., $\pi(A)V\mathcal H$ is dense in $\mathcal K$, and $\theta$ is a c.c.p. module map, we may add an ingredient $\rho: \theta(A)^{'}\to \pi(A)^{'}$ such that $\theta(a)x=V^*\pi(a)\rho(x)V$, for $a\in A$ and $x\in \theta(A)^{'}$. The module map $\rho$ is defined on the dense subspace $\pi(A)V\mathcal H$ by
$$\rho(x)(\sum_i\pi(a_i)V\xi_i)= \sum_i\pi(a_i)Vx\xi_i,$$
and the rest goes as in the classical case \cite[1.5.6]{bo}.
\end{remark}

To prove a module version of Arveson extension theorem, we need some preparation. We return to the setup of the previous section and consider left $\mathfrak A$-modules again. For left $C^*$-modules $A$ and $B$, let $CP_{\mathfrak A}(A,B)$ and $P_{\mathfrak A}(A,B)$ denote the set of c.p. and positive left module linear maps from $A$ to $B$, respectively.

\begin{lemma} \label{cp1}
Let $\mathfrak A$ be a unital $C^*$-algebra and $A$ be a unital module. For each $n\geq 1$,

$(i)$ $CP_{\mathfrak A}(\mathbb M_n(\mathfrak A), A)=CP_{\mathbb C}(\mathbb M_n(\mathbb C), A)$,

$(ii)$ $CP_{\mathfrak A}(\mathbb M_n(\mathfrak A), A)=\mathbb M_n(A)_{+}.$
\end{lemma}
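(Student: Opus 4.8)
The plan is to reduce both parts to the classical correspondence $CP_{\mathbb C}(\mathbb M_n(\mathbb C),A)=\mathbb M_n(A)_+$ (\cite{bo}). The one structural fact I would use throughout is that, since $A$ is a unital module, the compatibility conditions force $\alpha\cdot a=\rho(\alpha)a$, where $\rho(\alpha):=\alpha\cdot 1_A$ defines a unital $*$-homomorphism $\rho:\mathfrak A\to\mathcal Z(A)$ (centrality of $\rho(\alpha)$ coming from $(\alpha\cdot a)^*=\alpha^*\cdot a^*$). On $\mathbb M_n(\mathfrak A)$ the action is entrywise, $\alpha\cdot[\alpha_{ij}]=[\alpha\alpha_{ij}]$, the matrix units $e_{ij}$ span a copy of $\mathbb M_n(\mathbb C)$, and $[\alpha_{ij}]=\sum_{ij}\alpha_{ij}\cdot e_{ij}$; hence any module map $\phi$ out of $\mathbb M_n(\mathfrak A)$ is determined by $a_{ij}:=\phi(e_{ij})$ through $\phi([\alpha_{ij}])=\sum_{ij}\rho(\alpha_{ij})a_{ij}$.

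For $(i)$ I would compare restriction $R(\phi)=\phi|_{\mathbb M_n(\mathbb C)}$ with extension $E(\psi)([\alpha_{ij}])=\sum_{ij}\rho(\alpha_{ij})\psi(e_{ij})$. Restriction of a c.p. module map is c.p., being composition with the unital inclusion $\mathbb M_n(\mathbb C)\hookrightarrow\mathbb M_n(\mathfrak A)$, and the displayed formula for $\phi$ shows immediately that $E\circ R=\mathrm{id}$ and $R\circ E=\mathrm{id}$. Thus restriction is a bijection as soon as I know it is onto the completely positive maps, i.e. as soon as $E$ carries c.p. maps to c.p. maps.

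That last point is the only real obstacle, and it is where centrality enters. Given c.p. $\psi$, the classical correspondence gives $[a_{ij}]=[\psi(e_{ij})]\geq 0$ in $\mathbb M_n(A)$, so I may factor $a_{ij}=\sum_k c_{ki}^*c_{kj}$ with $c_{ki}=\big([a_{ij}]^{1/2}\big)_{ki}$. Because each $\rho(\alpha_{ij})$ is central it commutes past the $c$'s, and a rearrangement yields
\[
E(\psi)([\alpha_{ij}])=\sum_k\sum_{ij}c_{ki}^*\,\rho(\alpha_{ij})\,c_{kj}=\sum_k c_k^*\,\hat\rho([\alpha_{ij}])\,c_k,
\]
where $c_k=(c_{k1},\dots,c_{kn})^{t}$ and $\hat\rho:\mathbb M_n(\mathfrak A)\to\mathbb M_n(A)$ is the $*$-homomorphism applying $\rho$ entrywise. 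This exhibits $E(\psi)=\Xi\circ\hat\rho$ with $\Xi(y)=\sum_k c_k^*yc_k$ a compression, so $E(\psi)$ is a composition of completely positive maps and is completely positive. I expect the subtlety to be precisely the use of centrality to slide $\rho(\alpha_{ij})$ inside the compression; without it the $n=1$, noncommutative $A$ case already fails to be positive.

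Finally, $(ii)$ is then immediate: composing the bijection of $(i)$ with the classical identification $\psi\leftrightarrow[\psi(e_{ij})]$ identifies $\phi\in CP_{\mathfrak A}(\mathbb M_n(\mathfrak A),A)$ with $[\phi(e_{ij})]\in\mathbb M_n(A)_+$, and the forward and backward constructions above show this is a bijection onto $\mathbb M_n(A)_+$.
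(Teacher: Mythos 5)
Your proof is correct, and it takes a genuinely different route from the paper's at the one place where something has to be proved. Both arguments set up the same bijection: restriction $R(\phi)=\phi|_{\mathbb M_n(\mathbb C)}$ (c.p. because it is composition with the unital inclusion $\mathbb M_n(\mathbb C)\hookrightarrow\mathbb M_n(\mathfrak A)$) against the extension determined by the module property, $\phi([\alpha_{ij}])=\sum_{ij}\alpha_{ij}\cdot\phi(e_{ij})$; and both deduce $(ii)$ from $(i)$ via the classical identification $CP_{\mathbb C}(\mathbb M_n(\mathbb C),A)=\mathbb M_n(A)_+$ of \cite[1.5.12]{bo}. The difference is in showing that the extension $E(\psi)$ of a c.p. map $\psi$ is completely positive. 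The paper factors $E(\psi)$ through the Haagerup tensor product, as the composition of $id\otimes\psi:\mathfrak A\otimes^{h}\mathbb M_n(\mathbb C)\to\mathfrak A\otimes^{h}A$ with the module action $\mathfrak A\otimes^{h}A\to A$, asserting both are c.p.; this is short but delicate, since $\mathfrak A\otimes^{h}A$ is only an operator space and ``completely positive'' there needs interpretation (the rigorous version of this step is the product of the commuting-range c.p. maps $\rho$ and $\psi$ on $\mathfrak A\otimes_{\max}\mathbb M_n(\mathbb C)=\mathbb M_n(\mathfrak A)$, which again rests on centrality). You instead first isolate the structural fact --- correctly derived from the compatibility conditions and neo-unitality, and only implicit in the paper --- that $\rho(\alpha)=\alpha\cdot 1_A$ is a unital $*$-homomorphism into $\mathcal Z(A)$ with $\alpha\cdot a=\rho(\alpha)a$, and then verify complete positivity by hand: the square-root factorization $\psi(e_{ij})=\sum_k c_{ki}^*c_{kj}$ plus centrality gives $E(\psi)(x)=\sum_k c_k^*\,\hat\rho(x)\,c_k$ with $\hat\rho=\rho\otimes id_{\mathbb M_n}$ a $*$-homomorphism, a manifestly c.p. expression. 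This is essentially Choi's classical argument transported to the module setting; it buys elementariness and transparency (no tensor-product positivity claims, and the role of centrality is pinpointed exactly where your counterexample remark says it must enter), at the cost of being longer than the paper's one-line factorization.
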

\begin{proof}
$(i)$ Recall that a set of matrix units in a unital $C^*$-algebra is a set $\{e_{ij}\}$ of elements satisfying
$$e_{ij}^*=e_{ji}, \ e_{ij}e_{kl}=\delta_{jk}e_{il}, \ \sum_i e_{ii}=1.$$
If $e_{ij}\in \mathbb M_n(\mathbb C)$ is the standard set of matrix units, to each $\theta \in CP_{\mathbb C}(\mathbb M_n(\mathbb C), A)$ we associate $\tilde\theta: \mathbb M_n(\mathfrak A) \to A$ defined by $$\tilde\theta(\sum_{i,j=1}^{n} \alpha_{ij}\otimes e_{ij})=\sum_{i,j=1}^{n} \alpha_{ij}\cdot \theta(e_{ij}).$$
This is clearly a module map. To see it is c.p., observe that $\tilde\theta$ is a composition of the c.p. map $id\otimes\theta: \mathfrak A\otimes^{h}\mathbb M_n(\mathbb C)\to \mathfrak A\otimes^{h}A$ with the c.p. module action $\cdot: \mathfrak A\otimes^{h}A\to A$, thus $\tilde\theta\in CP_{\mathfrak A}(\mathbb M_n(\mathfrak A), A).$

To go the other direction, we to any $\sigma\in CP_{\mathfrak A}(\mathbb M_n(\mathfrak A), A)$  the map $\theta: \mathbb M_n(\mathbb C)\to A$ defined by $\theta(x)=\sigma(1_{\mathfrak A}\otimes x)$, which is clearly a c.p. map and, since $\sigma$ is a module map, $$\tilde\theta(\sum_{i,j=1}^{n} \alpha_{ij}\otimes e_{ij})=\sum_{i,j=1}^{n}\sigma(1_{\mathfrak A}\otimes e_{ij})=\sigma(\sum_{i,j=1}^{n} \alpha_{ij}\otimes e_{ij}),$$ i.e., $\tilde\theta=\sigma$.

$(ii)$ This follows from $(i)$ and the fact that $CP_{\mathbb C}(\mathbb M_n(\mathbb C), A)=\mathbb M_n(A)_{+},$ by \cite[1.5.12]{bo}.
\end{proof}

We have the following extension of \cite[Proposition 1.5.14]{bo}. The idea of the proof is similar, but of course the GNS-construction used in \cite{bo} is not working in this general setting (and is replaced by an application of the above Stinespring dilation theorem).

\begin{lemma} \label{cp2}
Let $\mathfrak A$ be a unital  $C^*$-algebra and $A$ is a unital module. For each $n\geq 1$, then $CP_{\mathfrak A}(A, \mathbb M_n(\mathfrak A))=CP_{\mathfrak A}(\mathbb M_n(A), \mathfrak A).$
\end{lemma}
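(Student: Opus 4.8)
The plan is to realize the claimed equality as a single concrete bijection $\phi\mapsto\hat\phi$ on linear maps, and then to verify separately that this bijection preserves (a) the module-map property and (b) complete positivity, in both directions. Since each side of the asserted identity consists of maps that are simultaneously module maps and c.p., establishing (a) and (b) yields the result.

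First I would fix the correspondence. For a linear map $\phi\colon A\to\mathbb{M}_n(\mathfrak A)$ write $\phi(a)=[\phi_{ij}(a)]_{i,j=1}^n$ for its entries $\phi_{ij}\colon A\to\mathfrak A$, and set $\hat\phi\colon\mathbb{M}_n(A)\to\mathfrak A$, $\hat\phi([a_{ij}])=\sum_{i,j}\phi_{ij}(a_{ij})$. Conversely, for $\psi\colon\mathbb{M}_n(A)\to\mathfrak A$ define $\check\psi(a)_{ij}=\psi(a\otimes e_{ij})$, where $a\otimes e_{ij}$ is the matrix with $a$ in the $(i,j)$ slot and $0$ elsewhere. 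A one-line computation with matrix units gives $\check{\hat\phi}=\phi$ and $\hat{\check\psi}=\psi$, so $\phi\mapsto\hat\phi$ is a bijection on linear maps. Because the left action on both $\mathbb{M}_n(\mathfrak A)$ and $\mathbb{M}_n(A)$ is entrywise (the $(i,j)$ entry of $\alpha\cdot x$ is $\alpha$ times the $(i,j)$ entry of $x$), one checks at once that $\hat\phi(\alpha\cdot[a_{ij}])=\alpha\cdot\hat\phi([a_{ij}])$ precisely when $\phi$ is a module map, and dually for $\check\psi$; hence the bijection matches module maps with module maps.

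The core is that $\phi$ is c.p.\ iff $\hat\phi$ is. For the implication $\hat\phi$ c.p.\ $\Rightarrow\phi$ c.p., I would take $[a_{pq}]\in\mathbb{M}_k(A)_+$ and form the auxiliary matrix $Y\in\mathbb{M}_{kn}(\mathbb{M}_n(A))$ with blocks $Y_{(p,i),(q,j)}=a_{pq}\otimes e_{ij}$. Writing $[a_{pq}]=C^*C$ with $C=[c_{tp}]$, the identity $Y_{((p,i),u),((q,j),v)}=a_{pq}\,\delta_{iu}\delta_{jv}=\sum_t c_{tp}^*c_{tq}\,\delta_{iu}\delta_{jv}$ exhibits $Y$ as a sum of terms $S_t^*S_t$, so $Y\ge0$. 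Applying complete positivity of $\hat\phi$ at level $kn$ and using $\hat\phi(a_{pq}\otimes e_{ij})=\phi_{ij}(a_{pq})=\phi(a_{pq})_{ij}$ then yields $[\phi(a_{pq})]_{p,q}\ge0$ in $\mathbb{M}_k(\mathbb{M}_n(\mathfrak A))$, i.e.\ $\phi$ is c.p. For the converse, given $[X_{rs}]\in\mathbb{M}_m(\mathbb{M}_n(A))_+$ with $X_{rs}=[x_{rs,ij}]$, I would identify $\mathbb{M}_m(\mathbb{M}_n(A))$ with $\mathbb{M}_{mn}(A)$, apply complete positivity of $\phi$ at level $mn$ to obtain a positive $Z=[\phi(x_{rs,ij})_{uv}]_{((r,i),u),((s,j),v)}$ in $\mathbb{M}_{mn}(\mathbb{M}_n(\mathfrak A))\cong\mathbb{M}_{mn^2}(\mathfrak A)$, and compress by the scalar matrix $V$ with $V_{((r',i),u),r}=\delta_{r'r}\delta_{ui}$. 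A direct computation gives $(V^*ZV)_{rs}=\sum_{i,j}\phi_{ij}(x_{rs,ij})=\hat\phi(X_{rs})$, whence $[\hat\phi(X_{rs})]_{r,s}=V^*ZV\ge0$ and $\hat\phi$ is c.p. Note that, positivity in a sub-$C^*$-algebra being intrinsic, these two arguments use nothing about $\mathfrak A$ beyond its being a $C^*$-algebra; the module structure enters only through the first step.

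I expect the main difficulty to be purely organizational: keeping the multi-indices $(p,i)$ and $((r,i),u)$ straight through the $*$-isomorphisms $\mathbb{M}_m(\mathbb{M}_n(A))\cong\mathbb{M}_{mn}(A)$ and $\mathbb{M}_{mn}(\mathbb{M}_n(\mathfrak A))\cong\mathbb{M}_{mn^2}(\mathfrak A)$, and choosing the test matrix $Y$ and the compression $V$ so that $\hat\phi$ reappears with exactly the right coefficients. Everything else is routine. As an alternative, the reconstruction direction could be carried out by dilating $\hat\phi$ through the module Stinespring theorem (Theorem \ref{dial}) in place of the classical GNS construction; but the compression argument above is self-contained and avoids it.
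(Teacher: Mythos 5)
Your proof is correct, and it diverges from the paper's precisely in the direction that the paper regards as the crux. Both arguments are built on the same correspondence $\varphi\mapsto\hat\varphi$, $\hat\varphi([a_{ij}])=\sum_{i,j}\varphi(a_{ij})_{ij}$, and in the direction ``$\varphi$ c.p.\ $\Rightarrow$ $\hat\varphi$ c.p.''\ the two proofs are essentially the same compression argument: the paper writes $\hat\varphi([a_{ij}])=\langle\varphi_n([a_{ij}])(e\otimes 1_{\mathfrak A}),(e\otimes 1_{\mathfrak A})\rangle$ in the Hilbert module $\ell^2_n\otimes\mathfrak A$, which is your scalar-matrix compression $V^*ZV$ in different clothing. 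The genuine difference is the converse. The paper emphasizes that the GNS construction used in \cite[1.5.14]{bo} is unavailable for module maps and therefore dilates $\hat\varphi=V^*\pi(\cdot)V$ via its module Stinespring theorem (Theorem \ref{dial}), recovering $\varphi(a)=(V^n)^*\pi_n([e_{ij}\otimes a])V^n$ and reading off positivity from the representation $\pi$. You instead prove positivity of the Choi-type matrix $Y$ with blocks $a_{pq}\otimes e_{ij}$ by an explicit factorization $Y=\sum_t S_t^*S_t$ obtained from $[a_{pq}]=C^*C$, and then simply apply $(\hat\varphi)_{kn}$ --- no dilation and no representation theory at all. Your route also cleanly decouples the two properties in the statement: the c.p.\ equivalence is a purely matricial fact valid over any $C^*$-algebra (the module structure is irrelevant to it), while the module-map property transfers through the bijection by an entrywise computation; the paper's proof intertwines the two. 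What the paper's approach buys is a demonstration of its own dilation theorem in action, in keeping with the theme of the paper; what yours buys is self-containedness --- the lemma ceases to depend on Theorem \ref{dial} --- together with an explicit verification that the correspondence is a bijection and matches module maps with module maps, points the paper leaves implicit.
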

\begin{proof}
To each $\varphi\in CP_{\mathfrak A}(A, \mathbb M_n(\mathfrak A))$ we are going to associate $\hat\varphi\in CP_{\mathfrak A}(\mathbb M_n(A), \mathfrak A),$ via
$$\hat\varphi([a_{ij}])=\sum_{i,j=1}^{n}\varphi(a_{ij})_{ij}.$$
For $k,n\geq 1$, let $\{e_i\}_{1}^{n}$ be an ONB of $\ell_n^2$. Let $e=[e_1,\cdots,e_k]^T\in (\ell_n^2)^k$. Given $\varphi\in CP_{\mathfrak A}(A, \mathbb M_n(\mathfrak A))$, let $\varphi_k$ be its amplification on $\mathbb M_k(A))$. Then, for the case $k=n$, we have
\begin{align*}
\hat\varphi([a_{ij}])&=\sum_{i,j=1}^{n}\varphi(a_{ij})_{ij}\\
&=\sum_{i,j=1}^{n}\langle\varphi(a_{ij})(e_{j}\otimes 1_{\mathfrak A}), (e_{i}\otimes 1_{\mathfrak A})\rangle \\
&=\langle\varphi_n([a_{ij}])(e\otimes 1_{\mathfrak A}), (e\otimes 1_{\mathfrak A})\rangle\geq 0,
\end{align*}
thus $\hat\varphi$ is positive. Since the above correspondence behaves canonically on amplifications, a similar argument shows that each amplification $\hat\varphi_k$ is positive, i.e., $\hat\varphi$ is c.p.

Conversely, if $\hat\varphi$ is c.p., take a faithful  representation of $\mathfrak A\subseteq B(H)$. Consider $H$ as a $\mathbb C$-$\mathfrak A$-Hilbert space and apply Theorem \ref{dial} to $\hat\varphi\in CP_{\mathfrak A}(\mathbb M_n(A), B(H))$ to get a $\mathfrak A$-$\mathbb C$-Hilbert space $K$,  a representation $\pi: \mathbb M_n(A)\to B(K)$ and a bounded bimodule map $V: H\to K$ such that
$$\hat\varphi([a_{ij}])=V^*\pi([a_{ij}])V.$$
Let $\{e_{ij}\}$ be the set of canonical matrix units of $\mathbb M_n(\mathbb C)$, then  for $a\in A$, $e_{ij}\otimes a$ could be regarded as an element in $\mathbb M_n(A)$, and
\begin{align*}
\varphi(a)&=[\varphi(a)_{ij}]=[\hat\varphi(e_{ij}\otimes a)]\\
&=[V^*\pi(e_{ij}\otimes a)V]\\&=(V^n)^*[\pi(e_{ij}\otimes a)]V^n\\
&=(V^n)^*\pi_n[(e_{ij}\otimes a)]V^n.
\end{align*}
Hence $\varphi$ is positive. Again by repeating the same calculations for amplifications, we get that $\varphi$ is also c.p.
\end{proof}

\begin{lemma} \label{cp3}
Let $\mathfrak A$ be a unital  $C^*$-algebra and $A$ is a unital module. Let $E\subseteq A$ be an operator subsystem and a submodule. Then

$(i)$ If $\psi: E\to \mathfrak A$ is a positive module map, then every norm-preserving extension of $\psi$ to a module map on $A$ is positive.

$(ii)$ If $\psi: E\to \mathbb M_n(\mathfrak A)$ is a c.p. module map, then there is a norm-preserving c.p. extension of $\psi$ to a module map on $A$.
\end{lemma}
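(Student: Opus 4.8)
The plan is to treat $\mathfrak A$-valued module maps as the analogue of scalar functionals and $\mathbb M_n(\mathfrak A)$-valued module maps as the analogue of $\mathbb M_n(\mathbb C)$-valued maps, so that the argument runs parallel to the classical route toward Arveson's theorem: a positive functional on an operator system has a norm-preserving extension which is automatically positive, and a c.p.\ map into $\mathbb M_n$ is handled by passing to the associated functional on the $n\times n$ amplification (our Lemmas \ref{cp1} and \ref{cp2}).

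For part $(i)$ I would first record that a positive module map attains its norm at the unit, so $\|\psi\|=\|\psi(1_A)\|$; since $1_A\in E$ and $\tilde\psi$ extends $\psi$, any norm-preserving module extension satisfies $\|\tilde\psi\|=\|\psi(1_A)\|=\|\tilde\psi(1_A)\|$ with $\tilde\psi(1_A)=\psi(1_A)\ge 0$. To pass from ``norm attained at $1_A$'' to genuine positivity I would test positivity of elements of $\mathfrak A$ by states: for a state $\omega$ on $\mathfrak A$ the functional $\omega\circ\tilde\psi$ on $A$ extends the positive functional $\omega\circ\psi$ on $E$, and by the classical scalar lemma (a functional $f$ with $f(1)=\|f\|$ is positive) it would suffice to check $\|\omega\circ\tilde\psi\|=(\omega\circ\tilde\psi)(1_A)=\omega(\psi(1_A))$.

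The hard part is precisely this last verification, since a priori one only has $\|\omega\circ\tilde\psi\|\le\|\tilde\psi\|=\|\psi(1_A)\|$, which exceeds $\omega(\psi(1_A))$ for states $\omega$ not concentrated where $\psi(1_A)$ is largest. This is where the module hypothesis is indispensable: the compatibility conditions force the $\mathfrak A$-action to factor through the centre (so $\alpha\cdot 1_A\in Z(A)$, and on the target $\alpha\cdot\psi(a)=v(\alpha)\psi(a)$ with $v(\mathfrak A)\subseteq Z(\mathfrak A)$), and $\tilde\psi$ being an $\mathfrak A$-module map then lets one localise the norm estimate over the centre and reduce positivity of $\tilde\psi$ to the scalar lemma in representations where $\psi(1_A)$ becomes a genuine scalar. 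I expect this centre-localisation to be the main obstacle, and the point at which the central (in the relevant applications, commutative) nature of $\mathfrak A$ is genuinely used.

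For part $(ii)$ I would exploit the correspondence of Lemma \ref{cp2}. A c.p.\ module map $\psi:E\to\mathbb M_n(\mathfrak A)$ gives, via the identification $CP_{\mathfrak A}(-,\mathbb M_n(\mathfrak A))\cong CP_{\mathfrak A}(\mathbb M_n(-),\mathfrak A)$, an associated positive module map $\hat\psi:\mathbb M_n(E)\to\mathfrak A$, $\hat\psi([a_{ij}])=\sum_{i,j}\psi(a_{ij})_{ij}$. Since $\mathbb M_n(E)$ is an operator subsystem and submodule of the unital central $\mathfrak A$-module $\mathbb M_n(A)$, I would produce a norm-preserving module extension (by a module Hahn--Banach/Wittstock-type argument) and invoke part $(i)$ to see it is a positive module map $\widehat\Psi:\mathbb M_n(A)\to\mathfrak A$; transporting $\widehat\Psi$ back through the correspondence yields a module map $\Psi:A\to\mathbb M_n(\mathfrak A)$ extending $\psi$. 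The remaining point, which I would treat as the second obstacle, is to check that $\Psi$ is completely positive and norm-preserving: positivity of $\widehat\Psi$ should correspond to complete positivity of $\Psi$ exactly because positive $\mathfrak A$-valued module maps are automatically completely positive in this central setting (the analogue of ``positive functionals are c.p.''), and the correspondence is isometric on the relevant norms by construction.
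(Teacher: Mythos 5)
Your proposal follows the same architecture as the paper's own proof: for (i), show that a positive module map attains its norm at $1_A$ and reduce positivity of the extension to the scalar fact that a functional attaining its norm at the unit is positive; for (ii), transport $\psi$ through the correspondence of Lemma \ref{cp2} to $\hat\psi$ on $\mathbb M_n(E)$, extend, apply (i), and transport back. But at both decisive points you only name the obstacle and defer it, so this is not yet a proof. In (i), you correctly observe that for a state $\omega$ one only gets $\|\omega\circ\tilde\psi\|\le\|\tilde\psi\|=\|\psi(1_A)\|$, which is useless unless $\omega(\psi(1_A))=\|\psi(1_A)\|$; but your proposed repair, ``centre-localisation,'' rests on the claim that the compatibility conditions force $\alpha\cdot 1_A\in Z(A)$ and force the action on the target into $Z(\mathfrak A)$. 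That claim is unjustified and contradicts the paper's own definitions: the lemma assumes $A$ is a \emph{unital} module, not a \emph{central} one, and the compatibility conditions only give $\alpha\cdot a=(\alpha\cdot 1_A)a$, not centrality of $\alpha\cdot 1_A$ (if unital implied central, the paper's distinction between these notions would be vacuous). The paper's proof needs far less than positivity of $\omega\circ\tilde\psi$ for all states: it fixes one positive $a$, writes $\tilde\psi(a)=\beta+i\gamma$ with $\gamma\neq 0$, picks a single functional $\phi$ with $\phi(\psi(1_A))=1=\|\phi\|$ and $\phi(\gamma)\neq 0$, notes that $\phi\circ\tilde\psi$ then attains its norm at $1_A$ and hence is a positive, therefore hermitian, functional, giving a contradiction; this yields hermitianness of $\tilde\psi$, and positivity of $\beta$ is then obtained by a separate norm estimate, not by states at all.

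In (ii) the gap is sharper. Transporting your extension $\widehat\Psi:\mathbb M_n(A)\to\mathfrak A$ back through Lemma \ref{cp2} requires $\widehat\Psi$ to be \emph{completely} positive, while part (i) only delivers positivity. You bridge this with the bare assertion that positive $\mathfrak A$-valued module maps are automatically c.p.\ ``in this central setting''; that is precisely the missing content, not a citable fact --- for maps into a noncommutative $C^*$-algebra, positive does not imply completely positive (the transpose map is the standard counterexample), and you give no module-theoretic argument that rescues it here (the paper compresses this same step, but a blind proof cannot lean on that). Likewise, the existence of a norm-preserving module-map extension of $\hat\psi$, which you invoke as ``a module Hahn--Banach/Wittstock-type argument,'' is never constructed; note that the Wittstock-type theorem proved earlier in this paper requires the target to be an injective $C^*$-algebra, which $\mathfrak A$ is not assumed to be, so it cannot simply be quoted. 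Both halves of your proposal therefore stop exactly where the real work begins.
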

\begin{proof}
$(i)$\ Given $0<\varepsilon<\|\psi\|$, choose $x\in E$ with $\|x\|\leq 1$ and $\|\psi(x)\|\geq \|\psi\|-\varepsilon$. If $\alpha:=\psi(x)$, then replacing $x$ by $\frac{\alpha}{\|\alpha\|}\cdot x$, we may assume that $\psi(x)\geq 0$. Again, replacing $x$ by $\frac{1}{2}(x+x^*)$, we may assume that $x$ is self-adjoint. Thus $x\leq \|x\|1_A$, and so $\psi(x)\leq\|x\|\psi(1_A)$. Since both $\psi(x)$ and $\psi(1_A)$ are positive,
$$\|\psi\|-\varepsilon\leq\|\psi(x)\|\leq\|x\|\ \|\psi(1_A)\|\leq\|\psi(1_A)\|\leq\|\psi\|,$$
thus $\|\psi(1_A)\|=\|\psi\|.$

Let $\tilde\psi$ be any norm-preserving extension of $\psi$ to a module map on $A$. For simplicity, let us assume that $\|\tilde\psi\|=\|\psi\|=1$. Let $a\in A$ be a positive element with $\|a\|\leq 1$ and put $\tilde\psi(a)=\beta+i\gamma$, with $\beta,\gamma$ self adjoint in $\mathfrak A$ and $\gamma\neq 0$. By Hahn Banach, choose a linear functional $\phi$ on $\mathfrak A$ with $\phi(\psi(1_A))=1=\|\phi\|$ and $\phi(\gamma)\neq 0$. Then $\phi$ is a state and
$$ \phi(\tilde\psi(1_A))=\|\psi(1_A)\|=1=\|\tilde\psi\|\geq\|\phi\circ\tilde\psi\|,$$
hence $\phi(\tilde\psi(1_A))=\|\phi\circ\tilde\psi\|$, and so $\phi\circ\tilde\psi$ is a positive functional on $A$. On the other hand,  $0\leq\phi\circ\tilde\psi(a)=\phi(\beta)+i\phi(\gamma)$, and since positive functionals are self adjoint, we get $\phi(\gamma)=0$, which is a contradiction. Thus $\gamma=0$, that is, $\beta=\tilde\psi(a)$ is self adjoint.

Next, $0\leq\psi(1_A)=\tilde\psi(1_A)\leq 1_{\mathfrak A}$, hence $\beta-1_{\mathfrak A}\leq \tilde\psi(a-1_A)$ and both sides are self adjoint, thus $$\|\beta-1_{\mathfrak A}\|\leq \|\tilde\psi(a-1_A)\|\leq \|a-1_A\|\leq 1,$$
therefore, $\beta\geq 0$ in $\mathfrak A$.

$(ii)$\ To $\psi$ we associate a c.p. module map $\hat\psi: \mathbb M_n(E)\to \mathfrak A$ via Lemma \ref{cp2}, whose extension to $\mathbb M_n(E)$ is again c.p. by this lemma and part $(i)$. This proves $(ii)$ using the above lemma again.
\end{proof}

Now we are ready to prove the Arveson extension theorem for module maps.

\begin{theorem} [Arveson extension theorem] \label{arv}
Let $\mathfrak A$ be a unital  $C^*$-algebra and $A$ is a unital $($left$)$ module. Let $E\subseteq A$ be an operator subsystem and a submodule. Let $H$ be a right Hilbert $\mathfrak A$-module with a standard frame such that $\mathbb B(H)$ is a von Neumann algebra. Then each c.c.p. module map $\varphi: E\to \mathbb B(H)$ has a c.c.p. module map extension $\tilde\varphi: A\to \mathbb B(H)$.
\end{theorem}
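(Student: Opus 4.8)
The plan is to mimic the classical bootstrapping proof of the Arveson extension theorem \cite{bo}: the matricial case is already available as Lemma \ref{cp3}$(ii)$, and the passage to a general target $\mathbb B(H)$ is handled by compressing to finite blocks coming from the standard frame and then taking a weak--$*$ limit, exactly as one compresses by finite--rank projections in the scalar case. Let $\{\xi_i\}_{i\in I}$ be the standard frame of $H$ and $V\colon H\to H_{\mathfrak A}$ the associated frame transform into the standard Hilbert module $H_{\mathfrak A}=\ell^2(\mathfrak A)$, given by $Vx=(\langle\xi_i,x\rangle)_i$ and $V^*(a_i)_i=\sum_i\xi_i a_i$; by normalization $V^*V=1_H$, so $Q:=VV^*$ is a projection and $\mathbb B(H)\cong Q\,\mathbb B(H_{\mathfrak A})\,Q$ via $T\mapsto VTV^*$. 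Writing $\lambda_0(\alpha)$ for the diagonal left multiplication by $\alpha$ on $H_{\mathfrak A}$, the left $\mathfrak A$--action on $\mathbb B(H)$ is $\alpha\cdot T=\lambda(\alpha)T$ with $\lambda(\alpha)=V^*\lambda_0(\alpha)V$, and the structural fact I would exploit is that $V$ intertwines the two actions, $\lambda_0(\alpha)V=V\lambda(\alpha)$ (equivalently $Q$ commutes with every $\lambda_0(\alpha)$), which holds because the frame operator is built from the right $\mathfrak A$--module data and so commutes with left multiplication.

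For each finite $F\subseteq I$ let $P_F$ be the coordinate projection of $H_{\mathfrak A}$ onto the indices in $F$; since $\mathfrak A$ is unital, $P_F\,\mathbb B(H_{\mathfrak A})\,P_F\cong\mathbb M_{|F|}(\mathfrak A)$ as a left $\mathfrak A$--module, the action being entrywise left multiplication. Because $P_F$ commutes with the diagonal operators $\lambda_0(\alpha)$, the compression
\[\varphi_F\colon E\to\mathbb M_{|F|}(\mathfrak A),\qquad \varphi_F(x)=P_F\,V\varphi(x)V^*\,P_F,\]
is a c.c.p. module map (it is the composition of $\varphi$ with the c.c.p. maps $T\mapsto VTV^*$ and $T\mapsto P_FTP_F$, and the intertwining property of $V$ makes it module linear). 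By Lemma \ref{cp3}$(ii)$ each $\varphi_F$ extends to a norm--preserving c.c.p. module map $\tilde\varphi_F\colon A\to\mathbb M_{|F|}(\mathfrak A)\subseteq\mathbb B(H_{\mathfrak A})$, and I then transport it back into $\mathbb B(H)$ by $\hat\varphi_F(a)=V^*\tilde\varphi_F(a)V$, which is again a c.c.p. module map $A\to\mathbb B(H)$ (module linearity once more uses $\lambda_0(\alpha)V=V\lambda(\alpha)$).

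Finally I would pass to the limit along the net of finite subsets of $I$, directed by inclusion. For each $a\in A$ the elements $\hat\varphi_F(a)$ lie in the ball of radius $\|a\|$ of the von Neumann algebra $\mathbb B(H)$, which is weak--$*$ compact; by Tychonoff the net $(\hat\varphi_F)$ has a subnet converging point--weak--$*$ to a map $\tilde\varphi\colon A\to\mathbb B(H)$. Complete positivity and contractivity are inherited by point--weak--$*$ limits, and the module identity survives because left multiplication by the fixed operator $\lambda(\alpha)$ is weak--$*$ continuous, whence $\tilde\varphi(\alpha\cdot a)=\lambda(\alpha)\tilde\varphi(a)=\alpha\cdot\tilde\varphi(a)$. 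Since $P_F\to 1$ strongly, $B_F:=V^*P_FV\to 1_H$ strongly, and therefore $\hat\varphi_F(x)=B_F\,\varphi(x)\,B_F\to\varphi(x)$ strongly, a fortiori weak--$*$, for every $x\in E$; thus $\tilde\varphi|_E=\varphi$ and $\tilde\varphi$ is the desired c.c.p. module extension.

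The main obstacle is entirely the module bookkeeping through the frame: one must verify rigorously that the frame transform $V$ intertwines the left $\mathfrak A$--actions (equivalently that $Q$ and each $P_F$ commute with the diagonal operators $\lambda_0(\alpha)$), so that every truncation $\varphi_F$, every matricial extension $\tilde\varphi_F$, and the limit $\tilde\varphi$ are genuinely module maps, and that the corners $P_F\,\mathbb B(H_{\mathfrak A})\,P_F$ really are $\mathbb M_{|F|}(\mathfrak A)$ as $\mathfrak A$--modules, so that Lemma \ref{cp3} applies without change. Positivity, contractivity, and the weak--$*$ limit argument are then identical to the scalar proof; it is only the compatibility of each step with the action of $\mathfrak A$, guaranteed by the standing compatibility conditions, that needs care.
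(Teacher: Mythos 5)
Your overall architecture is exactly the paper's: compress to matrix algebras $\mathbb M_n(\mathfrak A)$ using the standard frame, extend each compression by Lemma \ref{cp3}$(ii)$, and pass to a point-ultraweak cluster point (the paper invokes \cite[1.3.7]{bo} for this last step, and its verification that the limit extends $\varphi$ is the same strong-convergence observation you make). The genuine gap is the ``structural fact'' on which all of your module bookkeeping rests: the claim that the frame transform intertwines the left multiplications, $\lambda_0(\alpha)V=V\lambda(\alpha)$, equivalently that $Q=VV^*$ (and likewise each $P_F$ relative to $Q$) commutes with every $\lambda_0(\alpha)$. This is false in general. Being ``built from the right $\mathfrak A$-module data'' only makes $V$, $Q$, $P_F$ \emph{right} $\mathfrak A$-linear (adjointable); it says nothing about commutation with left multiplications, because left and right multiplications on $\ell^2(\mathfrak A)$ are unrelated for noncommutative $\mathfrak A$. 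Concretely, take $\mathfrak A=\mathbb M_2(\mathbb C)$ and $H=e_{11}\mathfrak A$ with $\langle x,y\rangle=x^*y$; then $\{e_{11}\}$ is a standard one-element frame, $\mathbb B(H)\cong e_{11}\mathfrak A e_{11}\cong\mathbb C$ is a von Neumann algebra, $V$ is the inclusion $H\hookrightarrow\ell^2_1(\mathfrak A)=\mathfrak A$, and $Q=VV^*=\lambda_0(e_{11})$, which does not commute with $\lambda_0(e_{12})$. So the hypotheses of the theorem can hold while your intertwining fails.

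This is not a cosmetic defect, because without the intertwining the map $\lambda(\alpha):=V^*\lambda_0(\alpha)V$ is not even multiplicative (in the example, $\lambda(e_{12})\lambda(e_{21})=0\neq\lambda(e_{12}e_{21})=\mathrm{id}_H$), so ``$\alpha\cdot T=\lambda(\alpha)T$'' does not define a compatible left action on $\mathbb B(H)$; consequently the module-linearity of your $\varphi_F$, the applicability of Lemma \ref{cp3}$(ii)$, and the module identity for the limit map are all conditioned on an unproved and generally false premise. It is also not the action the paper works with: by the conventions of Section 3, the left $\mathfrak A$-action on $\mathbb B(H)$ is $(\alpha\cdot T)(\xi)=T(\xi\cdot\alpha)$. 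The paper's proof sidesteps your difficulty at precisely this point: it compresses by the projections $p_i\in\mathbb B(H)$ onto the submodules generated by finite subsets of the frame, and since each $p_i$ is adjointable, hence right $\mathfrak A$-linear, one gets $(\alpha\cdot p_i)(\xi)=p_i(\xi\cdot\alpha)=p_i(\xi)\cdot\alpha$ for free, which is all that is needed to check that $x\mapsto p_i\varphi(x)p_i$ is a module map --- no commutation of a range projection with left multiplications is ever invoked. To repair your write-up, drop the detour through $\ell^2(\mathfrak A)$ and the operators $\lambda(\alpha)$, use the paper's action on $\mathbb B(H)$, and derive the module property of the finite compressions directly from the $\mathfrak A$-linearity of the frame projections; your compactness and limit arguments can then stand as written.
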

\begin{proof}
Let $K$ be a finitely generated submodule of $H$. Then $K$ is the range of a finite rank adjointable operator, and so is complemented in $H$ \cite[Theorem 3.2]{lan}. hence $K$ is the range of a finite rank orthogonal projection. Let $\mathfrak B=\{x_\lambda\}$ be a standard frame for $H$, then for each $x\in H$, $x=\sum_\lambda\langle x,x_\lambda\rangle x_\lambda$, with convergence in norm. For each finite subset $i\subseteq \mathfrak B$, let $p_i$ be the corresponding orthogonal projection whose range is generated by $i$, then by the above norm convergence, $\{p_i\}$ is a decreasing net of finite rank projections converging to 1 in the strong operator topology in $\mathbb B(H)$.

If the range of $p_i$ is generated by $\xi_1,\cdots,\xi_{n_i}$ and $e_1,\cdots,e_{n_i}$ is the standard basis of $\ell^2_{n_i}$ then $p_iH$ could be identified with $\ell^2_{n_i}\otimes \mathfrak A$ via $\xi_j\mapsto e_j\otimes 1_{\mathfrak A}$. We then use the identifications
$$p_i\mathbb B(H)p_i=\mathbb B(p_iH)=\mathbb B(\ell^2_{n_i}(\mathfrak A))=\mathbb M_{n_i}(\mathfrak A).$$
Consider the c.c.p. module map $\varphi: E\to \mathbb B(H)$ as above and define $$\varphi_i: E\to \mathbb M_{n_i}(\mathfrak A); \ x\mapsto p_i\varphi(x)p_i.$$
Then $\varphi_i$ is a c.c.p. map. Let us observe that it is also a module map. Since the range of $p_i$ is a submodule of $H$, we have
$$(\alpha\cdot p_i)(\xi)=p_i(\xi\cdot\alpha)=p_i(\xi)\cdot\alpha,$$
for each $\alpha\in \mathfrak A$ and $\xi\in H$. Therefore,
\begin{align*}
p_i\varphi(\alpha\cdot x)p_i(\xi) &=p_i\big(\alpha\cdot \varphi(x)\big)p_i(\xi)\\
&=p_i\big(\varphi(x)(p_i(\xi)\cdot \alpha)\big)\\
&=p_i\big(\varphi(x)(p_i(\xi\cdot \alpha))\big)\\
&=\big(\alpha\cdot p_i\varphi(x)p_i\big)(\xi),
\end{align*}
for each $\alpha\in \mathfrak A$ and $\xi\in H$. By Lemma \ref{cp3}$(ii)$, $\varphi_i$ has a c.c.p. module map extension $\tilde\varphi_i: A\to \mathbb M_{n_i}(\mathfrak A)$. Consider $\tilde\varphi_i$ as a map into $\mathbb B(H)$ and use \cite[1.3.7]{bo} to get a cluster point $\tilde\varphi$ of the net $\{\tilde\varphi_i\}$ in the point-ultra-weak topology of $\mathbb B(H)$. This is a c.c.p. module map extension of $\varphi$.
\end{proof}

In the above theorem, the condition that $H$ has a standard frame is known to be satisfied when $H$ is finitely or countably generated. Also the condition that $\mathbb B(H)$ is a von Neumann algebra is satisfied when $\mathfrak A$ is a von Neumann algebra and $H$ is a self dual Hilbert $\mathfrak A$-module \cite[Theorem 3.10]{pa}. Both conditions are also satisfied if $H$ is a $\mathbb C$-$\mathfrak A$-Hilbert space (i.e., $H$ is a Hilbert space in which $\mathfrak A$ is represented.)

\begin{corollary}
Let $\mathfrak A$ be a unital  $C^*$-algebra and $A$ is a $C^*$-algebra and $\mathfrak A$-module with compatible actions. Consider the faithful representation $A\subseteq \mathbb B(H)$, where  $H$ is a right Hilbert $\mathfrak A$-module with standard frame such that $\mathbb B(H)$ is a von Neumann algebra. Then $A$ is injective in the category of $C^*$-algebras with compatible $\mathfrak A$-module structure with c.c.p. module maps, if and only if there is a conditional expectation $\mathbb E: \mathbb B(H)\to A$ which is a module map. This is independent of the choice of the faithful representation.
\end{corollary}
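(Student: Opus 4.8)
The plan is to prove the equivalence by a standard two-way argument, leveraging the module Arveson extension theorem (Theorem~\ref{arv}) for the forward direction and a direct composition argument for the converse. Throughout I fix the faithful representation $A\subseteq\mathbb B(H)$ with $H$ a right Hilbert $\mathfrak A$-module with standard frame and $\mathbb B(H)$ a von Neumann algebra, so that Theorem~\ref{arv} is applicable with $E=A$ and codomain $\mathbb B(H)$.

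For the direction ``injective $\Rightarrow$ conditional expectation exists'', I would start from the identity module map $\mathrm{id}_A:A\to A$, viewed as a c.c.p.\ module map into the ambient algebra $A\subseteq\mathbb B(H)$. Using injectivity of $A$ in the stated category, the inclusion $A\hookrightarrow\mathbb B(H)$ (a c.c.p.\ module embedding) admits a c.c.p.\ module map $\mathbb E:\mathbb B(H)\to A$ extending $\mathrm{id}_A$; that is, $\mathbb E|_A=\mathrm{id}_A$. A c.c.p.\ idempotent surjection onto a $C^*$-subalgebra is automatically a conditional expectation by Tomiyama's theorem, so $\mathbb E$ is the desired module conditional expectation. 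The one point needing care is that Tomiyama's theorem is about ordinary (non-module) c.c.p.\ projections; but the module property is an \emph{extra} structure carried along by $\mathbb E$ and does not interfere, so $\mathbb E$ is simultaneously a conditional expectation and a module map.

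For the converse ``conditional expectation exists $\Rightarrow$ injective'', suppose $\mathbb E:\mathbb B(H)\to A$ is a module conditional expectation, and let $C\subseteq D$ be an inclusion of $C^*$-algebras-with-compatible-$\mathfrak A$-module-structure together with a c.c.p.\ module map $\varphi:C\to A$; I must extend $\varphi$ to a c.c.p.\ module map on $D$. Composing with the inclusion, I regard $\varphi:C\to A\subseteq\mathbb B(H)$ as a c.c.p.\ module map into $\mathbb B(H)$, and apply the module Arveson extension theorem (Theorem~\ref{arv}) to obtain a c.c.p.\ module extension $\tilde\varphi:D\to\mathbb B(H)$. Then $\mathbb E\circ\tilde\varphi:D\to A$ is a c.c.p.\ module map (composition of c.c.p.\ module maps), and on $C$ it satisfies $\mathbb E\circ\tilde\varphi|_C=\mathbb E\circ\varphi=\varphi$, since $\varphi$ already takes values in $A$ and $\mathbb E$ restricts to the identity on $A$. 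This is the required extension, establishing injectivity.

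The main obstacle I anticipate is the final independence-of-representation claim. Here the point is that injectivity, as verified above, is an \emph{intrinsic} categorical property of $A$ having nothing to do with the chosen representation; so once the equivalence is proved for one admissible representation, the existence of \emph{some} module conditional expectation (for any admissible representation) is equivalent to this intrinsic injectivity, and hence holds for \emph{every} admissible representation. The subtlety is purely bookkeeping: one must confirm that each faithful representation $A\subseteq\mathbb B(H')$ of the stated form again satisfies the hypotheses of Theorem~\ref{arv} (standard frame, $\mathbb B(H')$ von Neumann), which is exactly what ``the choice of faithful representation'' ranges over, so the argument applies uniformly to each such $H'$.
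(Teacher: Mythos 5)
Your proposal is correct and follows essentially the same route as the paper: injectivity applied to the inclusion $A\subseteq\mathbb B(H)$ extends $\mathrm{id}_A$ to the conditional expectation (the paper leaves Tomiyama implicit), and conversely Theorem~\ref{arv} produces a c.c.p.\ module extension into $\mathbb B(H)$ which is then compressed by $\mathbb E$. Your added remarks on Tomiyama's theorem and on the intrinsic nature of injectivity for the independence claim only make explicit what the paper asserts without comment.
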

\begin{proof}
If the conditional expectation exists, and $B\subseteq C$ is a $C^*$-subalgebra and a submodule, for a $C^*$-algebra $C$ with compatible $\mathfrak A$-module structure, and $\varphi: B\to A$ is a c.c.p. module map, then by Theorem \ref{arv}, there is a  c.c.p. module map extension $\tilde\varphi: C\to \mathbb B(H)$. Thus $\mathbb E\circ\tilde\varphi: C\to A$ is a  c.c.p. module map extension.

Conversely, if $A$ is injective, again by the above theorem, the identity map: $A\to \mathbb A$ extends to a conditional expectation: $\mathbb B(H)\to A$. The independence from the choice of the faithful representation is clear.
\end{proof}

When $\mathfrak A$ and $A$ are von Neumann algebras, we always assume that the action of $\mathfrak A$ on $A$ is normal, that is, for each $a\in A$, the map $\alpha\mapsto \alpha\cdot a$ is a normal map from $\mathfrak A$ to $A$.

\begin{lemma} \label{normal}
Let $\mathfrak A$ be a finite, atomic von Neumann algebra with no central summands of type $I_{\infty, \infty}$. Let $A$ and $B$ be von Neumann algebras and left $\mathfrak A$-modules with compatible normal actions and let $B$ be injective as a von Neumann algebra. Then for each c.p. module map $\varphi: A\to B$, there is a net $\{\varphi_i\}$ of normal c.p. module maps from $A$ to $B$, converging to $\varphi$ in the point-norm topology.
\end{lemma}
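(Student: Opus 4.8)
The plan is to first pin down what a unital left $\mathfrak A$-module structure on a von Neumann algebra really is, then to run the classical normal-approximation theorem as an engine and correct its output in two independent ways: first make the approximants $\mathfrak A$-module maps, and then strengthen the mode of convergence.

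First I would record a structural reduction. Since $A$ is unital and neo-unital, the map $\theta_A\colon\mathfrak A\to A$, $\theta_A(\alpha)=\alpha\cdot 1_A$, is a unital $*$-homomorphism and $\alpha\cdot a=\theta_A(\alpha)a$; the same holds for $B$ via $\theta_B$. Normality of the actions says $\theta_A,\theta_B$ are normal, so $\theta_A(\mathfrak A),\theta_B(\mathfrak A)$ are von Neumann subalgebras and a module map is precisely a left $\theta(\mathfrak A)$-linear map. Using atomicity I would write $\mathfrak A=\ell^\infty\text{-}\bigoplus_k\mathbb M_{n_k}(\mathbb C)$ with minimal central projections $z_k$; finiteness together with the exclusion of $I_{\infty,\infty}$ summands keeps the matrix sizes $n_k$ under uniform control. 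Through the matrix units $\theta_A(e^{(k)}_{ij})$ and the correspondences of Lemma \ref{cp1} and Lemma \ref{cp2}, the problem then decouples along $k$ into left-$\mathbb M_{n_k}$-linear maps between the corner algebras, to be handled one block at a time and reassembled into a net indexed by the directed set of (finite sets of blocks)$\,\times\,$(approximation parameter).

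The engine is the non-module analogue of the statement (see the injective-target results in \cite{bo}): if $B$ is injective, the \emph{normal} c.p.\ maps $A\to B$ are point-weak$^*$ dense among all c.p.\ maps $A\to B$. Concretely I fix a faithful normal representation $B\subseteq\mathbb B(\mathcal H)$ and, by injectivity, a conditional expectation $\mathbb E\colon\mathbb B(\mathcal H)\to B$, which — exactly as in the proof of Lemma \ref{in}, once $\mathcal H$ carries the action $\alpha\cdot\xi=\theta_B(\alpha)\xi$ — can be taken to be a module map. Applying the classical density to $\varphi$ viewed merely as a c.p.\ map produces normal c.p.\ maps $\psi_i\to\varphi$ in the point-weak$^*$ topology. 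To repair equivariance I would not average the $\psi_i$ against left multiplications (which destroys positivity); instead, using the matrix-unit correspondence of Lemma \ref{cp1}/Lemma \ref{cp2} blockwise, each normal c.p.\ map on a corner extends uniquely to a normal c.p.\ \emph{module} map, and this symmetrization sends $\psi_i$ to normal c.p.\ module maps that still converge point-weak$^*$ to $\varphi$ (the correction is trivial in the limit because $\varphi$ is already a module map). Finally, the set of normal c.p.\ module maps $A\to B$ is convex, and since the ultraweak and ultrastrong topologies on $B$ share the same continuous functionals $B_*$, the point-weak$^*$ and point-strong closures of this convex set coincide; hence $\varphi$ already lies in its point-strong closure.

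The main obstacle is the final passage from point-strong to genuine point-norm convergence, which is exactly where the hypotheses on $\mathfrak A$ must do real work: for a general injective $B$ a c.p.\ map need \emph{not} be a point-norm limit of normal maps, so the upgrade cannot be formal. My expectation is that the uniform bound on the block sizes $n_k$ supplied by finiteness and the absence of $I_{\infty,\infty}$ summands is what lets the blockwise, corner-level convergence be made norm-convergent term by term, after which the finite reassembly preserves norm estimates. Verifying that the block reconstruction simultaneously preserves complete positivity, keeps the maps normal, and retains the limit \emph{uniformly enough} to survive this last upgrade is the delicate point, and I would expect it to consume most of the work; the earlier weak$^*$-density and convexity steps are essentially soft.
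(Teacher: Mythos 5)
Your outline stalls at precisely the step that contains all the difficulty, and the mechanism you hope will close it rests on a misreading of the hypotheses. The gap is the upgrade to \emph{point-norm} convergence. As you yourself observe, your soft steps (point-weak$^*$ density of normal c.p.\ maps, equivariance repair, convexity) can at best place $\varphi$ in the point-ultrastrong closure of the set of normal c.p.\ module maps, because the ultraweak and ultrastrong topologies share the same dual $B_*$; the norm topology has the strictly larger dual $B^*$, so no convexity/Hahn--Banach argument can bridge the remaining step. Your proposed fix --- that finiteness plus the absence of $I_{\infty,\infty}$ summands forces a uniform bound on the block sizes $n_k$ in $\mathfrak A=\bigoplus_k\mathbb M_{n_k}(\mathbb C)$ --- is false: $\bigoplus_{k\geq 1}\mathbb M_k(\mathbb C)$ is finite and atomic with no central summand of type $I_{\infty,\infty}$, yet its blocks are unbounded (that condition in \cite{m} concerns the type of a central summand together with its commutant in a representation, not the size of the atoms). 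Moreover, even granting blockwise norm control, reassembly over infinitely many blocks cannot yield point-norm convergence: for $a\in A$ one has $a=\sum_k(z_k\cdot 1_A)a$ only in the strong topology, not in norm, so a net indexed by finite sets of blocks leaves an error that is small only ultrastrongly; and a \emph{left} module map need not respect the two-sided corner decomposition you want to decouple along.

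The paper does not prove this density statement at all; it quotes it. After fixing faithful normal representations $A\subseteq\mathbb B(H)$ and $B\subseteq\mathbb B(K)$ and observing that the normal actions make $H$ and $K$ right Hilbert $\mathfrak A$-modules, it extends $\varphi$ to a c.p.\ module map $\tilde\varphi:\mathbb B(H)\to B$ by the module Arveson theorem (Theorem \ref{arv}), and then invokes \cite[Corollary 3.3]{m}: under exactly the stated hypotheses on $\mathfrak A$, every c.p.\ module map $\mathbb B(H)\to\mathbb B(K)$ is a point-norm limit of \emph{normal} c.p.\ module maps. That citation is where the hypotheses on $\mathfrak A$ are consumed --- they are Magajna's hypotheses, not the source of any combinatorial bound. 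The proof finishes by composing with a conditional expectation $\mathbb E:\mathbb B(K)\to B$ (a $B$-bimodule, hence $\mathfrak A$-module, map) and restricting to $A$. So to turn your outline into a proof you would have to supply a proof of Magajna's point-norm density theorem yourself; everything else in your write-up is the easy part.
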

\begin{proof}
Consider faithful normal representations $A\subseteq \mathbb B(H)$ and $B\subseteq \mathbb B(K)$ on Hilbert spaces $H$ and $K$. The normal representation $\pi_A: \mathfrak A\to \mathbb B(H); \alpha\mapsto \alpha\cdot 1_A$ gives a right Hilbert $\mathfrak A$-module on $H$. Similarly, $K$ is a right Hilbert $\mathfrak A$-module. By Theorem \ref{arv}, $\varphi$ has a c.p. module map extension $\tilde\varphi: \mathbb B(H)\to B$. By \cite[Corollary 3.3]{m}, there is a net $\tilde\varphi_i: \mathbb B(H)\to \mathbb B(K)$ of c.p. normal module maps, converging to $\tilde\varphi$ in the point-norm topology. Let $\mathbb E: \mathbb B(K)\to B$ be a conditional expectation. This is a c.p. $B$-module map, and since $B$ is unital, $\mathbb E$ is also a $\mathfrak A$-module map. Let $\varphi_i$ be the restriction of $\mathbb E\circ\tilde\varphi_i$ to $A$. Then the net $\varphi_i: \mathbb B(H)\to \mathbb B(K)$, consisting of c.p. normal module maps, converges to $\varphi$ in the point-norm topology.
\end{proof}

\begin{corollary}
Let $\mathfrak A$ be a finite, atomic, injective von Neumann algebra with no central summands of type $I_{\infty, \infty}$ and $H$ be a self dual Hilbert $\mathfrak A$-module with standard frame. Let $E\subseteq \mathbb B(H)$ be an ultra-weakly closed operator subsystem and a submodule. Let $\psi: E\to \mathbb M_n(\mathfrak A)$ be a c.c.p. module map, then there is a net of ultra-weakly continuous c.c.p. module maps $\psi_i: E\to \mathbb M_n(\mathfrak A)$, converging to $\psi$ in point-norm topology.
\end{corollary}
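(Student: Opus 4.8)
The plan is to reduce the statement to the two main results of this section, the module Arveson extension theorem (Theorem \ref{arv}) and the normal approximation lemma (Lemma \ref{normal}), once we recognize the codomain $\mathbb M_n(\mathfrak A)$ as the adjointable operators on a suitable Hilbert module. Concretely, take $H_0=\ell^2_n(\mathfrak A)$ with the canonical basis $\{e_j\otimes 1_{\mathfrak A}\}_{j=1}^n$ as a standard frame; since $\mathfrak A$ is a von Neumann algebra and $H_0$ is finitely generated, hence self-dual, the identification $\mathbb B(H_0)=\mathbb M_n(\mathfrak A)$ exhibits $\mathbb M_n(\mathfrak A)$ as a von Neumann algebra of the form required by Theorem \ref{arv}, exactly as in the identifications used in its proof, the left $\mathfrak A$-action being the entrywise one inherited from $H_0$.

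First I would extend $\psi$ off $E$. The ambient algebra $\mathbb B(H)$ carries its canonical unital $\mathfrak A$-module structure and, as $H$ is self-dual with a standard frame, is a von Neumann algebra; moreover $E\subseteq\mathbb B(H)$ is an operator subsystem and a submodule. Applying Theorem \ref{arv} with ambient algebra $\mathbb B(H)$ and codomain $\mathbb M_n(\mathfrak A)=\mathbb B(\ell^2_n(\mathfrak A))$ produces a c.c.p. module map $\tilde\psi:\mathbb B(H)\to\mathbb M_n(\mathfrak A)$ extending $\psi$. I would then feed $\tilde\psi$ into Lemma \ref{normal} with $A=\mathbb B(H)$ and $B=\mathbb M_n(\mathfrak A)$: both are von Neumann algebras with compatible normal $\mathfrak A$-actions, and $B=\mathbb M_n(\mathbb C)\otimes\mathfrak A$ is injective because $\mathfrak A$ is injective and injectivity passes to matrix amplifications. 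As $\mathfrak A$ is finite, atomic and free of central summands of type $I_{\infty,\infty}$, Lemma \ref{normal} yields a net $\{\tilde\psi_i\}$ of normal c.p. module maps $\mathbb B(H)\to\mathbb M_n(\mathfrak A)$ converging to $\tilde\psi$ in the point-norm topology.

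It remains to restrict and renormalize. Writing $c_i:=\|\tilde\psi_i(1)\|=\|\tilde\psi_i\|_{cb}$ (the two coincide for c.p. maps on the unital algebra $\mathbb B(H)$), I would set $\psi_i:=\tfrac{1}{\max(1,c_i)}\,\tilde\psi_i|_E$. Each $\psi_i$ is a c.p. module map with $\|\psi_i\|_{cb}=c_i/\max(1,c_i)\le 1$, hence c.c.p., and, being the restriction to the ultra-weakly closed subspace $E$ of an ultra-weakly continuous map, it is itself ultra-weakly continuous. Since $\tilde\psi_i(1)\to\tilde\psi(1)=\psi(1)$ gives $c_i\to\|\psi(1)\|\le 1$, the scalars $\max(1,c_i)$ tend to $1$, so together with $\tilde\psi_i\to\tilde\psi$ in point-norm and $\tilde\psi|_E=\psi$ we obtain $\psi_i\to\psi$ in the point-norm topology on $E$. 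I expect the only delicate points to be bookkeeping rather than substance: verifying that $\mathbb M_n(\mathfrak A)$ legitimately serves as the range algebra in \emph{both} Theorem \ref{arv} and Lemma \ref{normal}, with the normal $\mathfrak A$-action inherited from $\ell^2_n(\mathfrak A)$, and noting that scaling by a positive scalar preserves the module property. There is no deeper obstacle once these identifications are fixed.
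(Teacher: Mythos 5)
Your proof is correct and follows the same skeleton as the paper's --- extend by the module Arveson theorem (Theorem \ref{arv}), approximate by normal maps via Lemma \ref{normal}, then restrict and renormalize --- but the middle step is genuinely different. The paper does not apply Lemma \ref{normal} with codomain $\mathbb M_n(\mathfrak A)$; instead it first invokes the duality of Lemma \ref{cp2} to trade $\psi$ for $\hat\psi\in CP_{\mathfrak A}(\mathbb M_n(\mathbb B(H)),\mathfrak A)$, applies Lemma \ref{normal} with codomain $\mathfrak A$ itself (so the corollary's injectivity hypothesis is used verbatim), and then pulls the approximating net of normal maps back through the same correspondence before adjusting to contractions via \cite[1.6.3]{bo}. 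You bypass Lemma \ref{cp2} entirely by observing that injectivity passes to matrix amplifications, so that $\mathbb M_n(\mathfrak A)=\mathbb B(\ell^2_n(\mathfrak A))$ is itself an injective von Neumann algebra with a compatible normal $\mathfrak A$-action, and Lemma \ref{normal} applies directly with $A=\mathbb B(H)$, $B=\mathbb M_n(\mathfrak A)$. The trade-off: the paper's detour never needs injectivity of the amplified algebra, but it tacitly assumes that the entrywise correspondence $\varphi\leftrightarrow\hat\varphi$ of Lemma \ref{cp2} preserves both normality and point-norm convergence of nets --- plausible from the explicit formulas, but left unverified; your direct route sidesteps exactly that bookkeeping. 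Your explicit rescaling by $\max\bigl(1,\|\tilde\psi_i(1)\|\bigr)$ is a legitimate replacement for the paper's appeal to \cite[1.6.3]{bo}, since $\tilde\psi_i(1)\to\tilde\psi(1)=\psi(1)$ in norm with $\|\psi(1)\|\leq 1$, and scaling by a positive scalar preserves complete positivity, the module property, and ultra-weak continuity of the restriction to the ultra-weakly closed subsystem $E$.
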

\begin{proof}
By Theorem \ref{arv}, we may assume that $E=\mathbb B(H)$. By Lemma \ref{cp2}, we get a map $\hat\psi\in CP_{\mathfrak A}(\mathbb M_n(\mathbb B(H)), \mathfrak A).$ By Lemma \ref{normal} there is a net of normal module maps $\{\hat\psi_i\} \subseteq CP_{\mathfrak A}(\mathbb M_n(\mathbb B(H)), \mathfrak A)$, converging to $\hat\psi$ in the point-norm topology. Then the net $\{\psi_i\} \subseteq CP_{\mathfrak A}(\mathbb B(H), \mathbb M_n(\mathfrak A))$ converges to $\psi$ in the point-norm topology. Since $\|\psi_i(1)-\psi(1)\|\to 0$, as \cite[1.6.3]{bo}, we may adjust $\psi_i$'s to be contractive.
\end{proof}

A representation $\sigma: A\to \mathbb B(H)$ is called essential if $\pi(A)\cap \mathbb K(H)=0$. A u.c.p. map $\phi: A\to \mathbb B(H)$ is called a representation modulo compacts if $\pi\circ\phi$ is a $*$-homomorphism, where $\pi:\mathbb B(H)\to\mathbb B(H)/\mathbb K(H)$ is the quotient map. For a map $V: K\to H$ between Hilbert spaces (or Hilbert $C^*$-modules we use the usual notation $ad_V: \mathbb B(H)\to \mathbb B(K)$ to denote the adjoint map: $x\mapsto V^*xV$. The next result is \cite[Theorem 3.2]{m}, adapted to the notations of the current paper.

\begin{theorem} [Voiculescu extension theorem] \label{voic}
Let $\mathfrak A$ be a finite atomic von Neumann algebra and $A$ be a separable $C^*$-algebra and unital central left $\mathfrak A$-module. Let $K$ be a separable Hilbert space, on which $\mathfrak A$ acts via a faithful representation, and $\varphi: A\to \mathbb B(K)$ be a faithful representation modulo compacts and a left $\mathfrak A$-module map, and $\sigma: A\to  \mathbb B(H)$ be a faithful unital essential representation. Let $\tau:\mathfrak A\to A; \alpha\mapsto \alpha\cdot 1_A$ be faithful and give $H$ the structure of a right Hilbert $\mathfrak A$-module via the faithful representation $\sigma\circ\tau: \mathfrak A\to \mathbb B(H)$. Assume that the commutant of $\sigma\circ\tau(\mathfrak A)$ in $\mathbb B(H)$ is not a finite von Neumann algebra. Let $B\subseteq \mathbb B(H)$ be a separable $C^*$-subalgebra and a left $\mathfrak A$-submodule. Then there is a net $V_i: K\to H$ of module map isometries such that $(ad_{V_k}- \varphi)(B)\subseteq \mathbb K(K)$, for each $k$, and $ad_{V_k}\to \varphi$ on $A$ in the point-norm topology.
\end{theorem}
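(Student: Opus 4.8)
The plan is to adapt the classical Voiculescu theorem, carrying the left $\mathfrak A$-action through every step, so that each projection, approximate unit, and isometry produced is an $\mathfrak A$-module map; this is exactly the content of \cite[Theorem 3.2]{m} reread in the present notation. Since $\mathfrak A$ is finite and atomic we may write $\mathfrak A=\bigoplus_j \mathbb M_{n_j}(\mathbb C)$ with each $n_j<\infty$. Via $\sigma\circ\tau$ this splits $H$ into $\mathfrak A$-isotypic components, and the faithful representation of $\mathfrak A$ on $K$ splits $K$ likewise; a module map isometry $V\colon K\to H$ is then precisely an isometric intertwiner of these two $\mathfrak A$-representations, respecting the $\mathbb M_{n_j}$-block structure. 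First I would reduce, using separability of $A$ and $B$, to a single approximation step: given finite sets $F\subseteq A$, $G\subseteq B$ and $\varepsilon>0$, produce one module isometry $V$ with $\|ad_V(\sigma(a))-\varphi(a)\|<\varepsilon$ for $a\in F$ and $(ad_V-\varphi)(b)\in\mathbb K(K)$ for $b\in G$. A diagonal argument over an increasing exhaustion of $A$ and $B$ then yields the sequence, hence the net.

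The heart of the matter is an $\mathfrak A$-equivariant quasicentral approximate unit. Non-equivariantly, essentiality of $\sigma$ (that is, $\sigma(A)\cap\mathbb K(H)=0$) furnishes, by Arveson's theorem, an approximate unit $(u_\lambda)$ of $\mathbb K(H)$ with $\|[u_\lambda,\sigma(a)]\|\to 0$ for all $a$. To make this equivariant I would apply the conditional expectation $\mathbb E\colon\mathbb B(H)\to \sigma\circ\tau(\mathfrak A)'$ obtained by averaging $ad_u$ over the unitary group of $\mathfrak A$; finiteness and atomicity guarantee that this group is, blockwise, compact, so the average exists, is normal, and carries $\mathbb K(H)$ into itself. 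Because $A$ is a \emph{central} module, $\tau(\mathfrak A)=\mathfrak A\cdot 1_A$ lies in $Z(A)$, whence $\sigma(A)$ commutes with $\sigma\circ\tau(\mathfrak A)$, i.e.\ $\sigma(A)\subseteq \sigma\circ\tau(\mathfrak A)'$; since $\mathbb E$ is a module map over its range, $\|[\mathbb E(u_\lambda),\sigma(a)]\|=\|\mathbb E([u_\lambda,\sigma(a)])\|\le\|[u_\lambda,\sigma(a)]\|\to 0$, so quasicentrality survives. Thus $(\mathbb E(u_\lambda))$ is a quasicentral approximate unit consisting of $\mathfrak A$-module maps.

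With this in hand I would run the Weyl--von Neumann--Berg block construction: cut $H$ into an increasing sequence of finite-rank $\mathfrak A$-invariant projections extracted from $(\mathbb E(u_\lambda))$, and on each fresh block build a module-map partial isometry from a finite-dimensional $\mathfrak A$-invariant piece of $K$ into $H$ that intertwines $\varphi$ and $\sigma$ up to $\varepsilon$ and modulo compacts. Here the hypothesis that $\sigma\circ\tau(\mathfrak A)'$ is \emph{not} finite is indispensable: it says precisely that $\sigma\circ\tau$ has infinite multiplicity on each isotypic component, so the $\mathfrak A$-representation on $K$ embeds into $H$ infinitely often, leaving room at every stage to place a new equivariant copy of the relevant finite piece of $K$ orthogonally to all earlier ones. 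Assembling these orthogonal equivariant blocks produces $V$, and the module-map property passes to the assembled isometry blockwise.

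The step I expect to be the main obstacle is exactly the tension between equivariance and matching: in the scalar proof one may choose partial isometries between any two subspaces of equal dimension, whereas here the subspaces must be $\mathfrak A$-invariant and the intertwiners must respect the $\mathbb M_{n_j}$-structure on each block. Guaranteeing that an equivariant partial isometry exists at every stage amounts to controlling the relative multiplicities of the isotypic components, and this is precisely where the non-finite commutant together with the self-dual, standard-frame structure of $H$ (which makes $\mathbb B(H)$ a von Neumann algebra) is used; once that room is secured, the remainder is a bookkeeping adaptation of the argument in \cite[Theorem 3.2]{m}.
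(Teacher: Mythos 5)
The paper contains no proof of this theorem: the sentence immediately preceding it reads ``The next result is \cite[Theorem 3.2]{m}, adapted to the notations of the current paper,'' and nothing more is offered. Your opening identification of the statement with Magajna's Theorem 3.2 is therefore exactly the paper's own ``proof,'' and everything after that in your proposal is a reconstruction that cannot be checked against anything in the paper itself.

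Within your sketch, one claim needs repair. You write that non-finiteness of $\sigma\circ\tau(\mathfrak A)'$ ``says precisely that $\sigma\circ\tau$ has infinite multiplicity on each isotypic component.'' It does not: writing $\mathfrak A=\bigoplus_j\mathbb M_{n_j}(\mathbb C)$ and $H\cong\bigoplus_j\mathbb C^{n_j}\otimes K_j$, the commutant is $\bigoplus_j 1\otimes\mathbb B(K_j)$, which fails to be finite as soon as a \emph{single} $K_j$ is infinite-dimensional, whereas your block construction needs room in \emph{every} block met by $K$. The fact you need is nevertheless true under the stated hypotheses, for a different reason: by centrality and unitality of the module action, $\tau(\alpha)\tau(\beta)=(\alpha\cdot 1_A)(\beta\cdot 1_A)=(\alpha\beta)\cdot 1_A=\tau(\alpha\beta)$ and $\tau(\alpha)^*=\tau(\alpha^*)$, so $\tau$ is a faithful $*$-homomorphism; hence for each minimal central projection $z_j\in\mathfrak A$ the element $\sigma\circ\tau(z_j)$ is a nonzero projection lying in $\sigma(A)$, and essentiality of $\sigma$ (i.e.\ $\sigma(A)\cap\mathbb K(H)=0$) forces it to have infinite rank, so every $K_j$ is infinite-dimensional. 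You should ground the ``room at every stage'' step on essentiality of $\sigma$ together with faithfulness of $\tau$, not on the non-finite-commutant hypothesis; with that substitution your outline is a plausible account of the equivariant Weyl--von Neumann--Berg argument behind \cite[Theorem 3.2]{m}, but it remains a sketch, and the paper itself provides nothing against which its details could be compared.
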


\bibliographystyle{line}
\bibliography{JAMS-paper}

\end{document}